\theoremstyle{plain}
\newtheorem{theorem}{Theorem}[section]
\newtheorem{lemma}[theorem]{Lemma}
\theoremstyle{definition}
\newtheorem{definition}[theorem]{Definition}
\newtheorem{remark}[theorem]{Remark}
\newtheorem{example}[theorem]{Example}
\tikzset{->-/.style={decoration={
  markings,
  mark=at position .5 with {\arrow{>}}},postaction={decorate}}}
\def\N{\mathbb{N}}
\def\R{\mathbb{R}}
\newcommand{\floor}[1]{\left\lfloor #1 \right\rfloor}
\newcommand{\ceil}[1]{\left\lceil #1 \right\rceil}
\newcommand{\norm}[2]{ \| #1 \|_{ #2 } }
\newcommand{\haa}[1]{( #1 )}
\newcommand{\abs}[1]{| #1 |}
\newcommand{\acc}[1]{ \{ #1 \} }
\newcommand{\accv}[2]{ \{ #1 \, | \, #2 \} }
\newcommand{\bighaa}[1]{\big( #1 \big)}
\newcommand{\bigbhaa}[1]{\big[ #1 \big]}
\newcommand{\bigacc}[1]{\big\{ #1 \big\}}
\newcommand{\bignorm}[2]{\big\| #1 \big\|_{ #2 } }
\newcommand{\bigabs}[1]{\big| #1 \big|}
\newcommand{\bigaccv}[2]{\big\{ #1 \, \big| \, #2 \big\}}
\newcommand{\Bighaa}[1]{\Big( #1 \Big)}
\newcommand{\Bigabs}[1]{\Big| #1 \Big|}
\newcommand{\Bigaccv}[2]{\Big\{ #1 \, \Big| \, #2 \Big\}}
\newcommand{\biggaccv}[2]{\bigg\{ #1 \, \bigg| \, #2 \bigg\}}
\newcommand{\bigghaa}[1]{\bigg( #1 \bigg)}
\newcommand{\lrhaa}[1]{\left( #1 \right)}
\newcommand{\lracc}[1]{\left\{ #1 \right\}}
\newcommand{\lrabs}[1]{\left| #1 \right|}
\newcommand{\lraccvl}[2]{\left\{ \left. #1 \, \right| \, #2 \right\}}
\newcommand{\lrbhaa}[1]{\left[ #1 \right]}
\newcommand{\ga}[2]{\frac{d #1 }{d #2 }}
\newcommand{\intabx}[4]{ \int_{ #1 }^{ #2 } { #3 } \, d { #4 } }
\newcommand{\rar}{\hspace{1mm} \Rightarrow \hspace{1mm}}
\newcommand{\argmin}{\operatorname*{arg\,min}}
\newcommand{\argmax}{\operatorname*{arg\,max}}
\newcommand{\markops}[1]{\operatorname{span} #1 }
\newcommand{\xto}[1]{\xrightarrow{ #1 }}
\newcommand{\dist}{\operatorname{dist}}
\newcommand{\mygraphic}[1]{\includegraphics[height=#1]{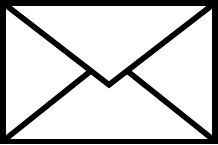}}
\newcommand{\myenv}{(\raisebox{0pt}{\mygraphic{.6em}})}
\def\XXint#1#2#3{{\setbox0=\hbox{$#1{#2#3}{\int}$}
     \vcenter{\hbox{$#2#3$}}\kern-.5\wd0}}
\newcommand{\Dom}{\operatorname{Dom}}
\newcommand{\parallelsum}{\, \mathbin{\!/\mkern-5mu/\!} \,}
\newcommand{\cE}{\mathcal E}
\newcommand{\cF}{\mathcal F}
\newcommand{\cG}{\mathcal G}
\newcommand{\cN}{\mathcal N}
\newcommand{\cR}{\mathcal R}
\newcommand{\cX}{\mathcal X}
\newcommand{\bzero}{\mathbf 0}
\newcommand{\ba}{\mathbf a}
\newcommand{\bb}{\mathbf b}
\newcommand{\bepsilon}{\boldsymbol \varepsilon}
\newcommand{\be}{\mathbf e}
\newcommand{\bg}{\mathbf g}
\newcommand{\bG}{\mathbf G}
\newcommand{\bOmega}{\boldsymbol \Omega}
\newcommand{\bu}{\mathbf u}
\newcommand{\bxi}{\boldsymbol \xi}
\newcommand{\bx}{\mathbf x}
\newcommand{\bX}{\mathbf X}
\newcommand{\by}{\mathbf y}
\newcommand{\bY}{\mathbf Y}
\newcommand{\bz}{\mathbf z}
\newcommand{\bZ}{\mathbf Z}
\DeclareFontFamily{U}{mathx}{\hyphenchar\font45}
\DeclareFontShape{U}{mathx}{m}{n}{
      <5> <6> <7> <8> <9> <10>
      <10.95> <12> <14.4> <17.28> <20.74> <24.88>
      mathx10
      }{}
\DeclareSymbolFont{mathx}{U}{mathx}{m}{n}
\DeclareMathAccent{\widecheck}{0}{mathx}{"71}
\numberwithin{equation}{section}
\title[Dynamics of screw dislocations]{Dynamics of screw dislocations: a generalised minimising-movements scheme approach}%
\author{Giovanni A.\@ Bonaschi}
\author{Patrick van Meurs}
\address{Centre for Analysis, Scientific computing and Applications (CASA), Department of Mathematics and Computer Science, Eindhoven University of Technology, P.O.~Box 513, 5600 MB Eindhoven, The Netherlands}
\email[G.~A.~ Bonaschi]{g.a.bonaschi@tue.nl}
\email[P.~van Meurs]{p.j.p.v.meurs@tue.nl}
\author{Marco Morandotti}
\address{SISSA -- International School for Advanced Studies, Via Bonomea, 265, 34136 Trieste, Italy. \emph{Tel:} +39 040 3787 422}
\email[M.~Morandotti \myenv]{marco.morandotti@sissa.it}
\date{\today. Preprint SISSA 38/2015/MATE}
\begin{document}

\begin{abstract}

The gradient flow structure of the model introduced in \cite{CermelliGurtin99} for the dynamics of screw dislocations 
is investigated by means of a generalised minimising-movements scheme approach. The assumption of a finite number of available glide directions, together with the ``maximal dissipation criterion'' that governs the equations of motion, results into 
solving a differential inclusion rather than an ODE. This paper addresses how the model in \cite{CermelliGurtin99} is connected to a time-discrete evolution scheme which explicitly confines dislocations to move each time step along a single glide direction. It is proved that the time-continuous model in \cite{CermelliGurtin99} is the limit of these time-discrete minimising-movement schemes when the time step converges to $0$. The study presented here is a first step towards a generalization of the setting in \cite[Chap.~2 and 3]{AGS08} that allows for dissipations which cannot be described by a metric.
\end{abstract}
\maketitle%
{\small

\keywords{\noindent {\bf Keywords:} Motion of dislocations, generalised gradient flows, minimising-movement scheme, energy dissipation inequality.}

\bigskip
\subjclass{\noindent {\bf {2010} 
Mathematics Subject Classification:}
{Primary: 49J40;  	
Secondary:
34A60,  	
70F99,  	
74Bxx.	
 }}
}\bigskip


\section{Introduction}
\label{chap:screw:dyncs:sec:intro}

This paper is devoted to the application of the minimising-movement framework to the model introduced in \cite{CermelliGurtin99} for the evolution of a finite number of screw dislocations, under the constraint that dislocation movement only occurs along a finite set of \emph{glide directions}.
This constraint enforces that defects only move along the so-called \emph{slip planes}, which are determined by the crystallographic structure. 
The direction followed by each dislocations is the one dictated by a \emph{maximal dissipation criterion}: it is the glide direction which is closest in direction to the Peach-K\"ohler force that acts on the dislocation.

Yet, this confinement of the dislocation motion along glide directions is not captured by the model in \cite{CermelliGurtin99}. Indeed, this model allows for dislocation glide along more complex paths: dislocations can switch from one direction to another one, or even move along curved lines. 
These behaviours are called \emph{cross-slip} and \emph{fine cross-slip}, respectively, in \cite{CermelliGurtin99}.
Roughly speaking, at the turning point in cross-slip the dislocation switches direction from the most dissipative one to another one which becomes equally dissipative.
This switch between one glide direction and another happens at a much faster time scale in fine cross-slip, giving the dislocation a curved trajectory.

The aim in this paper is to understand how such curved trajectories emerge from dislocation motion along glide directions. Our main question is:
\smallskip
\begin{quote}
\centering
\emph{How does the evolution model in \cite{CermelliGurtin99} connect to a different model which confines dislocations to move along glide directions only?}
\end{quote}
\medskip
Our main contribution is to solve this question by proposing a time-discrete variational model (i.e.~a minimising-movement scheme) for the movement of dislocations, which confines the motion of any dislocation at each time step to a single glide direction, and by proving that the limiting equation, as the time step goes to zero, is given by the model in \cite{CermelliGurtin99}. Interestingly, the confinement of dislocations to move along glide directions leads to a dissipation which cannot be described as the square of a distance (it will not satisfy the triangle inequality). Therefore, the well-known convergence of minimising-movement schemes (see e.g.~\cite{AGS08}) does not apply, and our result is a first step in generalizing the convergence of minimising-movement schemes.

After a brief prologue on dislocations in Section \ref{chap:screw:dyncs:ssec:dlcns}, we introduce the model from \cite{CermelliGurtin99} in Section~\ref{chap:screw:dyncs:sec:intro}, and our modified setting in Section~\ref{chap:screw:dyncs:ssec:intro:MMS}. We then provide an intuitive example (see Example~\ref{ex:screw:dlc}) to illustrate the dynamics of screw dislocations. In Section~\ref{sec:dislo_res_com} we describe and discuss Theorem \ref{thm:chap:screw:dyncs}, which is our main result that describes the connection between our modified setting and \cite{CermelliGurtin99}.

\subsection{Screw dislocations} \label{chap:screw:dyncs:ssec:dlcns}

\emph{Dislocations} are line-defects on the atomic length-scale in the crystallographic structure of the material. Typical metals contain many dislocations (for example, cold-rolled metal has a dislocation density of $10^{15}$ m$^{−2}$ \cite[p.~20]{HullBacon01}, which translates into $1000$ km of dislocation line per cubic millimetre). Their collective motion results in \emph{plastic deformation} of the material on length-scales between $1$ $\mu$m and $1$ mm. Figure~\ref{figure:edgeescrew} shows two straight segments of a dislocation line in a cubic crystallographic lattice. The result of dislocations being line-defects is that they induce a stress in the material. Dislocations elsewhere in the material may move as a consequence of the stress. This results in an intriguing and complex system of interaction line-defects. For more details we refer to \cite{HirthLothe82,HullBacon01}.

%
\begin{figure}[htb]
\centering
\includegraphics[scale=0.4]{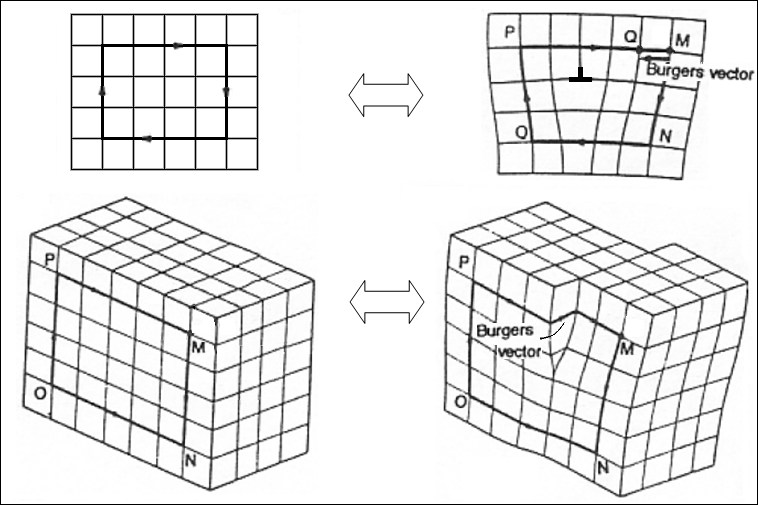}
\caption{Pictorial representation of edge (above) and screw (below) dislocations. (Source: \texttt{https://commons.wikimedia.org/wiki/File:Vector\_de\_Burgers.PNG}. Image by David Gabriel Garc\'ia Andrade.)}
\label{figure:edgeescrew}
\end{figure}

We study the evolution of a set of $n$ straight and parallel dislocation lines of 'screw'-type (see Figure \ref{figure:edgeescrew}) in a three-dimensional elastic material undergoing \emph{antiplane shear}. The body is modelled by an infinite cylinder $\Omega\times\R$. Therefore, we characterise the positions of the screw-dislocations by points in the cross-section $\Omega\subset\R^2$. Following the model proposed in \cite{CermelliGurtin99}, we assume that the $n$ screw dislocations $\bz_1,\ldots,\bz_n\in\Omega$ are constrained to move along 
%
\emph{glide directions}. The glide directions are determined by those directions along which the atoms in the material are most densely packed. For instance, for a cubic atomic lattice, there are four glide directions given by $\acc{\pm \be_1, \pm \be_2} \subset \mathbb S^1$ ($\be_i$ being the standard basis vectors in $\R^2$), and for a body-centred cubic or face-centred cubic lattice, there are six glide directions which span the triangular grid of equilateral triangles. We consider a more general setting in which the set of $N \in 2 \N{}$ glide directions is given by
\begin{equation} \label{for:defn:cG}
\cG := \acc{ \bg_1, \ldots, \bg_N } \subset \mathbb S^1,
\end{equation}
which satisfies the basic properties
\begin{equation} \label{for:G:props}
  \bg \in \cG \rar -\bg \in \cG,
  \quad \text{and} \quad
  \markops{\cG} = \R^2. 
\end{equation} 

Under 
the maximal dissipation criterion described above, the velocity vector of any smooth solution $t\mapsto \bZ(t) = ( \bz_1, \ldots, \bz_n) (t)$ is aligned along one of the glide directions at each time $t$. It turns out that generically, solutions of this type may not exist; this phenomenon is illustrated in Example \ref{ex:screw:dlc} below, and the simulations in \cite[Sec.~4]{BlassFonsecaLeoniMorandotti15} confirm this observation. In order to obtain existence of solutions, one is forced to allow the motion of dislocations along a direction different than any of the glide directions, called \emph{fine cross-slip}~\cite{CermelliGurtin99} (or `sliding motion' as in the theory of discontinuous differential equations (see e.g.~\cite{BernardoBuddChampneysKowalczyk08})). 
Note that fine cross-slip seemingly contradicts the model assumption that screw dislocations only move along glide directions. 
The variational evolution model that we propose here shows how fine cross-slip can be incorporated in the equations of motion, as the limit of rapidly alternating directions.

\subsection{The model in \cite{CermelliGurtin99}}
Before defining the evolution model which confines dislocations to move along glide directions only, we introduce the time-continuous model in \cite{CermelliGurtin99} in more detail. Based on \cite[(1.10)]{CermelliGurtin99}, in \cite{BlassFonsecaLeoniMorandotti15} the following differential inclusion is posed to model the time-continuous dynamics of screw dislocations:
\begin{equation} \label{for:evo:screw:BFLM}
  \ga {\bz_i}t (t)
  \in F_i (\bZ (t)), 
  \qquad \text{for all } t \geq 0, \: i = 1,\ldots,n,
\end{equation}
where $\bZ = (\bz_1, \ldots, \bz_n)^T \in \Omega^n$ denotes the positions of $n$ screw dislocations in a prescribed Lipschitz domain $\Omega \subset \R^2$. Each dislocation is associated with a \emph{Burgers vector} $\bb_i$, which describes the direction of lattice mismatch due to the presence of the dislocation itself (see Figure \ref{figure:edgeescrew}). Due to the assumption of antiplane shear, in our case the Burgers vectors are oriented along the $\be_3$-direction, so that they can be described by a scalar quantity $b_i$, which we call the \emph{Burgers modulus}: $\bb_i=b_i\be_3$. The quantities $b_i$ can be positive ($b_i = 1$) or negative ($b_i = -1$) and their sign is responsible for the attraction or repulsion of dislocations, as it can be seen in \eqref{for:defn:cE:chap:screw} below.


The set-valued function $F_i (\bZ)$ 
projects the force $f_i (\bZ)$ on the $i$-th dislocation onto the nearest glide direction. More precisely, we define the (nonlinear) multi-valued projection operator $P_\cG : \R^2 \to \R^2$ by
\begin{equation*}
  P_\cG \xi := \Bigaccv{ (\bg \cdot \xi ) \bg }{ \bg \in \argmax_{\tilde \bg \in \cG} \tilde \bg \cdot \xi }.
\end{equation*}
Then, $F_i (\bZ) = \operatorname{co} P_\cG f_i (\bZ)$, where $\operatorname{co}$ is the convex hull. The force $f_i (\bZ) = - \nabla_{\bz_i} \cE (\bZ)$ is defined in \cite[(4.4)]{BlassMorandotti14prep}, in which $\cE$ takes the form
\begin{equation} \label{for:defn:cE:chap:screw}
  \cE (\bZ) := \varphi (\bZ) + \sum_{i = 1}^n \sum_{\substack{ j = 1 \\ j\neq i } }^n - b_i b_j \log |\bz_i - \bz_j|,
\end{equation}
where $\varphi \in C^\infty (\Omega^n)$ (see \cite[Lemma~5.1]{BlassMorandotti14prep}) is bounded from above and satisfies $\varphi (\bZ) \to -\infty$ as $\dist (\bz_i, \partial \Omega) \to 0$ for any $i \in \N{}$. The logarithmic interaction potential corresponds to the Peach-K\"ohler force induced by screw dislocations, and $\varphi$ describes the effect of the traction-free boundary condition at $\partial \Omega$. For our purposes it is enough to have $\cE \in C^1 (\Omega^n \setminus S)$, where the set $S$ of singular points is given by
\begin{equation} \label{for:defn:S:chap:screw}
  \bZ \in S := \partial \Omega^n \cup \bigaccv{ \bZ \in \Omega^n }{ \exists \, i \neq j : \bz_i = \bz_j }.
\end{equation}

In \cite{BlassFonsecaLeoniMorandotti15}, local-in-time existence and uniqueness of solutions to \eqref{for:evo:screw:BFLM} is proven for suitable initial conditions. Here, a solution to \eqref{for:evo:screw:BFLM} is defined to be an absolutely continuous curve which satisfies \eqref{for:evo:screw:BFLM} for a.e.~$t > 0$. The proof of well-posedness in \cite{BlassFonsecaLeoniMorandotti15} relies on the theory for differential inclusions developed by Filippov \cite{Filippov88}.

Next we discuss the result in \cite{AlicandroDeLucaGarroniPonsiglione15prep} in the context of our main question above. In \cite{AlicandroDeLucaGarroniPonsiglione15prep} a fully atomistic model is considered to describe the energy for a given configuration $\bZ$ as a function of the atom spacing $\varepsilon$. The dynamics are defined by imposing a dissipation potential which is the square of a norm which is minimal in the glide directions. The main results concern the derivation of the effective energy in the limit $\varepsilon \to 0$ (which is also done in \cite{AlicandroDeLucaGarroniPonsiglione14}), and the passage to the limit in the related minimising-movement scheme as the time step converges to $0$. When the dissipation is chosen to be the square of the crystalline norm given by
\begin{equation} \label{for:defn:cryst:norm}
 \| \bx \| := \inf \bigaccv{ 
 \alpha + \beta 
 }{
  \text{there exist } \alpha, \beta \geq 0 \text{ and } \bg, \tilde \bg \in \cG \text{ such that } \bx = \alpha \bg + \beta \tilde \bg 
  },
\end{equation}
then the evolution \eqref{for:evo:screw:BFLM} is obtained as a generalised gradient flow (see, e.g., \cite{Mielke14LN} or Definition \ref{defn:genzd:GF}). While this result connects rigorously a detailed atomic description of the dynamics of screw dislocations to the time-continuous evolution \eqref{for:evo:screw:BFLM}, it does not answer our question above. Indeed, the dissipation in the atomic model is assumed to be derived from a norm, by which dislocations are not confined to move along glide directions.

\subsection{Evolution constrained along $\cG$} \label{chap:screw:dyncs:ssec:intro:MMS}

To confine screw dislocations to move along glide directions only, we impose a discrete-in-time evolution model as a minimising-movement scheme. The special feature of this scheme is that the related distance is replaced by a `quasi-distance' $D$ given by
\begin{align} \label{for:defn:d}
d &: \R^2 \times \R^2 \to [0, \infty],
&d (\bx, \by) 
&:= \begin{cases}
  | \bx - \by |, & \text{if } \bx - \by \in \mathbb R \mathcal G, \\
  \infty, & \text{otherwise},
\end{cases} \\\label{for:defn:D}
D &: (\R^2)^n \times (\R^2)^n \to [0, \infty],
&D^2 (\bX, \bY)
&:= \sum_{i=1}^n d^2 (\bx_i, \by_i).
\end{align}
Since $d$ is only finite along glide directions, $D$ violates the triangle inequality.

The minimising-movement scheme does not make sense for the energy $\cE$ defined in \eqref{for:defn:cE:chap:screw}, because $\cE$ is not bounded from below on the set $S$ of singular points \eqref{for:defn:S:chap:screw}. For this reason, we define a regularization of $\cE$ as follows: for any $\varepsilon > 0$, we take $\cE_\varepsilon \in C^\infty ((\R^2)^n)$ bounded from below, such that $\cE_\varepsilon = \cE$ on the closed set 
$$
\bOmega_\varepsilon := \bigaccv{ \bZ \in \Omega^n }{ \dist (\bZ, S) \geq \varepsilon }. 
$$

To define the minimising-movement scheme, we take $\tau > 0$ as the time step, $T > 0$ as the end time, and $\bZ^0 \in \Omega^n$ as the initial condition. A discrete-in-time solution $\bZ^k_\tau \in \Omega^n$ at the time points $t_k = k \tau$ is defined by 
\begin{equation} \label{for:MMS:D:intro}
  \left\{ \begin{aligned}
    \bZ_\tau^{k+1} &\in \argmin_{\bX \in (\R^2)^n} \Phi \big( \bZ_\tau^{k},\bX,\tau \big), &k = 0, \ldots, \ceil{ T/\tau } - 1, \\
    \bZ_\tau^0 &= \bZ^0,
  \end{aligned} \right.
\end{equation}
in which the functional $\Phi$ is given by
\begin{align*} 
  \Phi \big( \bX,\bY,\tau \big) 
      := \frac{ D^2 (\bX, \bY) }{2 \tau} + \cE_\varepsilon (\bY). 
\end{align*}
By the definition of $D$, dislocations are confined to move along one glide direction only for each time step.

\subsection{Illustration of screw dislocation motion}
\label{chap:screw:dyncs:sec:ex}

Example \ref{ex:screw:dlc} illustrates the evolution defined by \eqref{for:evo:screw:BFLM}. More involved examples concerning interacting dislocations are given in \cite{BlassFonsecaLeoniMorandotti15}.

\begin{example} \label{ex:screw:dlc}
This example is based on \cite[Figure 1]{CermelliGurtin99}. We consider a single screw dislocation in a medium $\Omega$ with glide directions given by $\cG = \acc{ \pm \be_1, \pm \be_2 }$ with $e_i$ the standard unit vectors. We impose a continuous force field $F : \R^2 \to (0, \infty)^2$. We choose $F$ such that the set $\accv{\bx \in \R^2}{F(\bx) \parallelsum (\be_1 + \be_2)}$ equals the boundary of a bounded domain $\cF$, in which $F \cdot \be_2 > F \cdot \be_1$, and outside of which $F \cdot \be_1 > F \cdot \be_2$. By \eqref{for:evo:screw:BFLM}, the dislocation will move along $\be_2$ if it is inside $\cF$, and along $\be_1$ if it is outside $\R^2$. We show trajectories of the dislocation for several initial conditions in Figure \ref{fig:screw:dlc:dyn}. We limit our attention here to the direction of the movement and not to its speed.

\begin{figure}[h]
\centering
\begin{tikzpicture}[scale=0.5, >= latex] 
\fill[color=gray] (-1.5,0.5) circle (0.1);
\draw[->-, thick, color=gray] (-1.5, 0.5) -- (0.76,0.5);
\draw[->-, thick, color=gray] (0.76,0.5) -- (0.8,4.7);
\draw[->-, thick, color=gray] (5,6) -- (12.1,6);
\draw[->-, thick, color=gray] (15,7) -- (22,7);

\fill[color=gray] (6.5,1.5) circle (0.1);
\draw[->-, thick, color=gray] (6.5, 1.5) -- (6.5,5.01);
\draw[->-, thick, color=gray] (6.49,5) -- (10.9,5);

\fill[color=gray] (6.8,1.2) circle (0.1);
\draw[->-, thick, color=gray] (6.8,1.2) -- (12.63,1.2);
\draw[->-, thick, color=gray] (12.62,1.19) -- (12.62,6.4);

\fill[color=gray] (19,2.5) circle (0.1);
\draw[->-, thick, color=gray] (19, 2.5) -- (19,6.13);
\draw[->-, thick, color=gray] (18.99,6.13) -- (22,6.13);

\fill[color=gray] (19.5,2) circle (0.1);
\draw[->-, thick, color=gray] (19.5, 2) -- (22,2);

\draw (0,3) .. controls (0,1.5) and (0.5,0) .. (2,0);
\draw[dotted] (2,0) .. controls (6,0) and (6,2) .. (10,2);
\draw (10,2) .. controls (12,2) and (12,1) .. (14,1);
\draw[dotted] (14,1) .. controls (17.5,1) and (21,2.5) .. (21,4);
\draw (21,4) .. controls (21,5.5) and (18,7) .. (15,7);
\draw[very thick] (15,7) .. controls (11.5,7) and (11.5,4) .. (8,4);
\draw (8,4) .. controls (6.5,4) and (6.5,6) .. (5,6);
\draw[very thick] (5,6) .. controls (2.5,6) and (0,5) .. (0,3);

\fill (5,6) node[anchor = south] {$A$} circle (0.1);
\fill (15,7) node[anchor = south] {$B$} circle (0.1);
\draw[->] (-2, 5) -- (-2,6.5) node[anchor=south] {$\be_2$};
\draw[->] (-2, 5) -- (-0.5,5) node[anchor=west] {$\be_1$};
\end{tikzpicture}
\caption{Typical setting of Example \ref{ex:screw:dlc}, along with a few trajectories of the screw dislocation. The closed curve depicts the points $\bx$ at which $F(\bx) \parallelsum (\be_1 + \be_2)$. }
\label{fig:screw:dlc:dyn}
\end{figure}
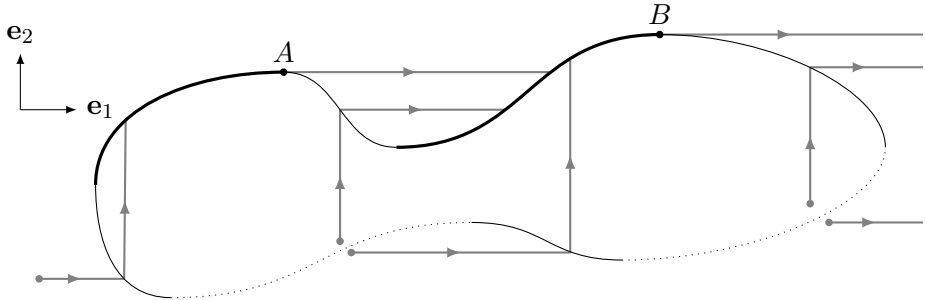

The interesting part of the dynamics is the behaviour of the dislocation at $\partial \cF$. On $\partial \cF$ the right-hand side of \eqref{for:evo:screw:BFLM} is multi-valued. $\partial \cF$ is called the \emph{ambiguity set}. We distinguish the following three types of ambiguity sets: sources (dotted line), cross-slip (thin line), and fine cross-slip (thick line). 

If the screw dislocation is at a cross-slip point (say at the left-lower part of Figure \ref{fig:screw:dlc:dyn}) at $t = t_1$, then it can move in any direction $\theta \be_1 + (1 - \theta)\be_2$ for $\theta \in [0,1]$. Whichever direction is chosen, at $t = t_1 + \Delta t$ the dislocation is inside $\cF$ for any $\Delta t > 0$ small enough, and hence it will move in direction $\be_2$ as soon as it enters $\cF$.

Following a similar reasoning, at a fine cross-slip point (say at the left-upper corner of Figure \ref{fig:screw:dlc:dyn}), we conclude that a dislocation tends to \emph{move along} a fine cross-slip set. If it hits the end point $A$, it will move along direction $\be_1$. 

Following a similar reasoning, we conclude that the initial-value problem may not have a unique solution at a source point. The word `source' should be understood here in the sense that time paths will never cross it. 

\end{example}

\subsection{Result and comments}
\label{sec:dislo_res_com}

Our main result is Theorem \ref{thm:chap:screw:dyncs}. It answers our main question by showing compatibility of the evolution constrained to glide directions with the evolution \eqref{for:evo:screw:BFLM} that can cause dislocations to move in any direction. In particular, it gives a precise meaning to fine cross-slip in terms of oscillating choices for the preferred glide direction in consecutive time steps as the time step size converges to $0$.

More precisely, Theorem \ref{thm:chap:screw:dyncs} states in what sense solutions to the minimising-movement scheme \eqref{for:MMS:D:intro} converge to a solution of 
\begin{equation} \label{for:evo:screw:BFLM:eps}
  \left\{ \begin{aligned}
    \ga {\bz_i}t (t)
  &\in \operatorname{co} P_\cG \bighaa{ - \nabla_{\bz_i} \cE_\varepsilon (\bZ (t)) }, 
  \quad \text{for all } t \in (0, T], \: i = 1,\ldots,n, \\
    \bz_i (0) &= \bz_i^0, 
  \end{aligned} \right.  
\end{equation}
as the time step $\tau \to 0$. By the definition of the regularization $\cE_\varepsilon$, any solution of \eqref{for:evo:screw:BFLM:eps} satisfies \eqref{for:evo:screw:BFLM} on the time interval $[0, T_\varepsilon]$, where $T_\varepsilon$ is chosen such that any solutions $\bZ (t)$ remains within the set $\bOmega_\varepsilon$. 
We notice that if we fix $\varepsilon_0>0$, then $T_{\varepsilon_0}>0$; moreover, it is easy to see that $T_\varepsilon$ is a non-increasing function of $\varepsilon$.
Therefore, $\varepsilon > 0$ can be chosen arbitrarily. 

As a corollary of Theorem \ref{thm:chap:screw:dyncs}, we obtain existence of solutions to \eqref{for:evo:screw:BFLM} up to the first time at which either two dislocations annihilate, or when a dislocation leaves the domain $\Omega$. This generalises the local-in-time well-posedness result of \cite{BlassFonsecaLeoniMorandotti15}. Theorem \ref{thm:chap:screw:dyncs} also reveals how \eqref{for:evo:screw:BFLM} can be written as a generalised gradient flow (see Section \ref{chap:screw:dyncs:ssec:MMSnEDI}). This gradient flow structure is also obtained in \cite[(3.11)]{AlicandroDeLucaGarroniPonsiglione15prep}.


Theorem \ref{thm:chap:screw:dyncs} is proved for all energies $E \in C^1 ((\R^2)^n) \cap W^{1,\infty} ((\R^2)^n)$. To the best of our knowledge, Theorem \ref{thm:chap:screw:dyncs} provides the first extension of the theory in \cite[Chapters~2 and 3]{AGS08} to dissipations which are not related to a distance. We wish to comment on further possible generalizations:
\begin{itemize}
\item Theorem \ref{thm:chap:screw:dyncs} extends to higher dimensions for the particle positions with little modifications to the proof. We provide the details in Section~\ref{ssec:extn}.
\item Our proof of Theorem \ref{thm:chap:screw:dyncs} heavily relies on the regularity of the energy $E$. In view of the discontinuities in the projection operator $P_\cG$, it seems reasonable to require the force field $- \nabla E$ to be continuous for \eqref{for:evo:screw:BFLM} to be well-posed. Hence, we do not aim to weaken the regularity conditions on $E$. 
\item By our regularisation $\cE_\varepsilon$ of $\cE$ in \eqref{for:defn:cE:chap:screw} we cannot describe the annihilation of dislocations or the absorption of dislocations at $\partial \Omega$. In Section~\ref{chap:screw:dyncs:sec:NnC} we describe an open problem concerning the description of these effects in a variational framework.
\end{itemize}

The remainder of this paper is structured as follows. In Section \ref{chap:screw:dyncs:ssec:MMSnEDI} we define precisely the minimising-movement scheme (MMS) and describe the evolution model \eqref{for:evo:screw:BFLM} in a variational framework by means of an energy dissipation inequality (EDI). Section \ref{chap:screw:dyncs:ssec:prelim} presents \emph{a priori} estimates which are required for the proof of Theorem \ref{thm:chap:screw:dyncs}, which is contained in  Section~\ref{chap:screw:dyncs:sec:thm:and:pf}. Section~\ref{sec:extn} concerns the extension and the open problem related to Theorem \ref{thm:chap:screw:dyncs} as mentioned above.

\section{Preliminaries}
\label{chap:screw:dyncs:sec:prelim}

\subsection{Notation}
\label{chap:screw:dyncs:ssec:not}

Here we list symbols and abbreviations that we use throughout this paper: 
\begin{longtable}{lll}
$|\bx|$ & Euclidean norm & \\
$\| \cdot \|$ & crystalline norm in $\R^2$ with respect to $\cG$ & \eqref{for:defn:cryst:norm} \\
$\| \cdot \|_\ast$ & dual norm of $\| \cdot \|$ & \eqref{for:defn:dual:cryst:norm} \\
$\bx \parallelsum \by$ & $\bx$ is parallel to $\by$, i.e.~$\bx / |\bx| = \by / |\by|$ & \\ 
$\operatorname{co}$ & convex hull & \\
$\hat d$ & metric induced by $\|\cdot\|$ & \eqref{for:defn:Dhat} \\
$d$ & `quasi-distance', which is only finite along $\bg \in \cG$ & \eqref{for:defn:D} \\
$\hat D$ & extension of the norm $\hat d$ to $(\R^2)^n$ & \eqref{for:defn:Dhat} \\
$D$ & extension of the `quasi-distance' $d$ to $(\R^2)^n$ & \eqref{for:defn:D} \\
$\cE$ & specific energy functional for edge dislocations; $\cE : \Omega^n \to \overline \R{}$ & \eqref{for:defn:cE:chap:screw} \\
$E$ & generic energy functional; $E \in C^1 ((\R^2)^n) \cap W^{1,\infty} ((\R^2)^n)$ & \\
EDI & energy-dissipation inequality & \eqref{for:EDIhat} \\
$\Phi$ & functional to be minimised in the $D$-MMS & \eqref{for:defn:Phi:E} \\
$\bg$, $\cG$ & glide direction in $\mathbb S^1$, and set of all glide directions & \eqref{for:G:props} \\
$\Lambda_\bg$ & cone around $\bg$ & \eqref{for:defn:Lambda} \\ 
MMS & minimising-movement scheme & \eqref{for:MMS:D} \\
$\bZ$ & particle positions; $\bZ = (\bz_1, \ldots, \bz_n)^T \in (\R^2)^n$ & \\
$\bZ_\tau^k$ & solution to the $D$-MMS at the $k$th time step & \eqref{for:MMS:D} \\
$\overline \bZ_\tau$ & step function related to $(\bZ_\tau^k)_k$; $\overline \bZ_\tau : [0, T] \to (\R^2)^n$ & \eqref{for:defn:Zbar} \\
$\bZ_\tau^\Gamma$ & De Giorgi interpolant; $\bZ_\tau^\Gamma : [0, T] \to (\R^2)^n$ & \eqref{for:defn:De:Giorgi:itplt} \\
\end{longtable}

\subsection{Glide directions}

In this section we show several basic properties of the set of glide directions $\cG$ defined in \eqref{for:defn:cG} satisfying the basic properties \eqref{for:G:props}. Figure \ref{fig:cG} illustrates an example. As a consequence of \eqref{for:G:props}, the number of glide directions $N \geq 4$ is even. As $\mathbb S^1$ has a cyclic ordering, we assume $\bg_1, \ldots, \bg_N$ to be ordered counter-clockwise, and we set for convenience $\bg_0 := \bg_N$ and $\bg_{N+1} := \bg_1$. Then, we define the bisectors
\begin{equation*}
  \bg_i' 
  := \frac{ \bg_i + \bg_{i+1} }{ |\bg_i + \bg_{i+1}| } 
  \in \mathbb S^1,
  \qquad i = 1,\ldots,N.
\end{equation*}
We define $\Lambda_{\bg_i}$ as the cone spanned by the bisectors $\bg_{i-1}', \bg_i'$ surrounding $\bg_i$, i.e.
\begin{equation} \label{for:defn:Lambda}
  \Lambda_{\bg_i} 
  := 
  \bigaccv{ 
\alpha \bg_{i-1}' + \beta \bg_i'
}{
\alpha, \beta > 0
}
  \subset \R^2.
\end{equation}
As a consequence,
\begin{equation} \label{for:Lambda:prop}
  \Lambda_\bg 
  = 
  \bigaccv{ 
\bx \in \R^{2}
}{
\bg \text{ is the unique maximizer of } \tilde \bg \cdot \bx \text{ for all } \tilde \bg \in \cG
}.
\end{equation}

The set $\cG$ induces the crystalline norm $\|\cdot \|$ given by \eqref{for:defn:cryst:norm}. The unit ball of $\| \cdot \|$ is given by $\operatorname{co} \cG$. The dual norm reads
\begin{equation} \label{for:defn:dual:cryst:norm}
 \| \bx \|_\ast := \max_{\bg \in \mathcal G} \bg \cdot \bx.
\end{equation}
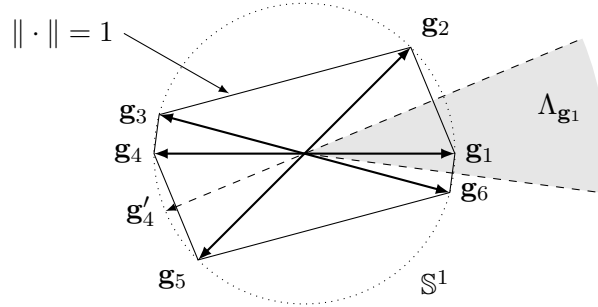
\begin{figure}[h]
\centering
\begin{tikzpicture}[scale=2, >= latex] 
\fill[fill=gray!20!white] (0,0) -- (1.9829,-0.2611) arc (-7.5:22.5:2) -- cycle;
\draw[dotted] (0,0) circle (1);
\draw (0.7071,-0.7071) node[anchor = north west]{$\mathbb S^1$};

\draw[thick, <->] (-1,0) node[anchor = east]{$\bg_4$} -- (1,0) node[anchor = west]{$\bg_1$};
\draw[thick, <->] (-0.7071,-0.7071) node[anchor = north east]{$\bg_5$} -- (0.7071,0.7071) node[anchor = south west]{$\bg_2$};
\draw[thick, <->] (-0.9659,0.2588) node[anchor = east]{$\bg_3$} -- (0.9659,-0.2588) node[anchor = west]{$\bg_6$};
\draw[dashed, <-] (-0.9239,-0.3827) node[anchor = east]{$\bg_4'$} -- (1.8478,0.7654); 
\draw[dashed] (0,0) -- (1.9829,-0.2611);

\draw [<-] (-0.5,0.4) -- (-1.2,0.8) node[anchor = east]{$\|\cdot\| = 1$};
\draw (1.7, 0.3) node{$\Lambda_{\bg_1}$};
\draw (1,0) -- (0.7071,0.7071) -- (-0.9659,0.2588) -- (-1,0) -- (-0.7071,-0.7071) -- (0.9659,-0.2588) -- (1,0); 
\end{tikzpicture}
\caption{Example of a set glide directions $\cG$ satisfying \eqref{for:G:props}. The figure also depicts several related objects such as: the bisector $\bg_4'$, the cone $\Lambda_{\bg_1}$, and the unit sphere of the crystalline norm $\|\cdot\|$.}
\label{fig:cG}
\end{figure}

\subsection{MMS and EDI}
\label{chap:screw:dyncs:ssec:MMSnEDI}

We consider $(\R^2)^n$ as the state space equipped with the `quasi-distance' $D$ defined in \eqref{for:defn:D}, and some $E \in C^1 ((\R^2)^n) \cap W^{1,\infty} ((\R^2)^n)$ as the energy functional. We note that this class of energy functionals includes the energy $\cE_\varepsilon$ introduced in Section \ref{chap:screw:dyncs:sec:intro} as a regularization of the energy $\cE$ \eqref{for:defn:cE:chap:screw} which describes the system of screw dislocations. \smallskip\\
\textbf{MMS}: For a time step $\tau > 0$, an initial condition $\bZ^0 \in (\R^2)^n$, and a set $\cG$ of $N$ glide directions satisfying \eqref{for:G:props}, we consider the $D$-MMS given by 
\begin{equation} \label{for:MMS:D}
  \left\{ \begin{aligned}
    \bZ_\tau^{k+1} &\in \argmin_{\bX \in (\R^2)^n} \Phi \big( \bZ_\tau^{k},\bX,\tau \big), &k = 0, \ldots, \ceil{ T/\tau } - 1, \\
    \bZ_\tau^0 &= \bZ^0,
  \end{aligned} \right.
\end{equation}
in which the functional $\Phi$ is given by
\begin{align} \label{for:defn:Phi:E}
  \Phi : (\R^2)^n \times (\R^2)^n \times (0, \infty) \to (-\infty, \infty],
  \qquad
      \Phi \big( \bX,\bY,\tau \big) 
      := \frac{ D^2 (\bX, \bY) }{2 \tau} + E (\bY). 
\end{align}
The only difference with \eqref{for:MMS:D:intro} is that we use the more general energy functional $E$. Existence of a solution $(\bZ_\tau^{k})_k$ to \eqref{for:MMS:D} is guaranteed by $E$ being bounded from below and lower-semicontinuous. Uniqueness does not hold in general. 

For any solution $(\bZ_\tau^{k})_k$ to \eqref{for:MMS:D} we define two interpolation curves as mappings from $[0, T]$ to $(\R^2)^n$. The first one is the piecewise constant interpolant
\begin{equation} \label{for:defn:Zbar}
    \overline \bZ_\tau (t) := \bZ_\tau^{k} \quad
    \text{for } t \in \big( (k-1) \tau, k\tau \big].
\end{equation} 
The second one is a De Giorgi interpolant $\bZ^\Gamma_\tau (t)$, which is defined as follows. For any $t \in [0,T]$, let 
  \begin{equation} \label{for:def:itau}
    k_\tau := \lceil t/\tau \rceil - 1,
    \quad \text{and note that}
    \quad t \in ( \tau k_\tau, \tau (k_\tau + 1) ].
  \end{equation}  
  Let $\delta := t - \tau k_\tau \in (0, \tau]$, and finally
  \begin{equation} \label{for:defn:De:Giorgi:itplt}
    \bZ^\Gamma_\tau (t) \in \argmin_{\bX \in (\R^2)^n} \Phi(\bZ_\tau^{k_\tau} , \bX, \delta).
  \end{equation}
As $\delta \leq \tau$, the following basic property (see the proof of \cite[Lemma~3.1.2]{AGS08}) holds
  \begin{equation} \label{for:est:on:ZGamma}
    D \big( \bZ_\tau^{k_\tau}, \bZ^\Gamma_\tau (t) \big)
    \leq D \big( \bZ_\tau^{k_\tau}, \overline \bZ_\tau (t) \big)
    = D \big( \bZ_\tau^{k_\tau}, \bZ_\tau^{k_\tau + 1} \big).
  \end{equation}

The main challenge in passing to the limit $\tau \to 0$ in \eqref{for:defn:Zbar} is that $D$ is not a metric. If it were a metric, the techniques in \cite[Chapter~2 and 3]{AGS08} would apply directly. For this reason, it is useful to consider the largest metric $\hat d$ which is smaller than $d$. It is easy to see that $\hat d$ is induced by the crystalline norm $\|\cdot\|$ on $\R^2$ defined in \eqref{for:defn:cryst:norm}. We further set
\begin{equation} \label{for:defn:Dhat}
\hat d (\bx, \by) 
:=
\| \bx - \by \|,
\qquad
\hat D^2 (\bX, \bY)
:=
\sum_{i=1}^n \hat d^2 (\bx_i, \by_i),
\end{equation}
We define the $\hat D$-MMS, $\hat \Phi$, and $\hat \bZ_\tau^k$ analogously to \eqref{for:MMS:D}-\eqref{for:defn:Phi:E} by replacing $D$ with~$\hat D$. \smallskip\\
\textbf{EDI}: Now we introduce the energy-dissipation inequality (EDI). To this aim, we first show that \eqref{for:evo:screw:BFLM} can be written as a (generalised) gradient flow (cf.~\cite{Mielke14LN}). 

\begin{definition}[Generalised gradient flow] \label{defn:genzd:GF}
A triple $(\cX, \mathsf E, \cR)$ is called a generalised gradient flow if
\begin{itemize}
  \item the state space $\cX$ is a smooth manifold;
  \item the energy functional $\mathsf E : \cX \to (-\infty, \infty]$ is smooth enough for the subdifferential $\operatorname D \mathsf E (x) \in T^\ast_x \cX$ to be well-defined for all $x \in \Dom \mathsf E$;
  \item the dissipation potential $\cR : T \cX \to [0,\infty]$ is convex, lower semicontinuous, and satisfies $\cR (x, 0) = 0$ for all $x \in \Dom \mathsf E$.
\end{itemize}
The related evolution is given by
\begin{equation} \label{for:defn:rate:eqn:genl}
\dot x \in \operatorname D_\xi \cR^\ast \bighaa{ x, - \operatorname D \mathsf E (x) },
\quad \text{in } \cX \text{ for a.e.~} t \in (0, T),
\end{equation}
where $\cR^\ast (x, \cdot )$ is the Legendre transform of $\cR (x, \cdot )$, and $\operatorname D_\xi$ denotes the subdifferential with respect to the second argument of $\cR^\ast$.

\end{definition}

We consider the generalised gradient flow given by the triple $( (\R^2)^n, E, \Psi )$, where $\Psi$ is defined by its Legendre transform
\begin{equation} \label{for:defn:Psi:ast:screw}
\begin{aligned}
  \psi^\ast &: \R^2 \to [0, \infty), 
  &\psi^\ast(\xi) 
  &:= \frac12 \| \xi \|_\ast^2 
  , \\
  \Psi^\ast &: (\R^2)^n \to [0, \infty), 
  &\Psi^\ast(\bxi) 
  &:= \sum_{i=1}^n \psi^\ast (\xi_i).
\end{aligned}
\end{equation}
With this choice, we obtain \eqref{for:evo:screw:BFLM} as the evolution \eqref{for:defn:rate:eqn:genl} of the triple $( (\R^2)^n, E, \Psi )$. This is easy to see after computing
\begin{align*}
  \operatorname D \Psi^\ast (\bxi)
  &= \operatorname D \psi^\ast (\xi_1) \times \ldots \times \operatorname D \psi^\ast (\xi_n)
  \subset (\R^2)^n, \\
  \operatorname D \psi^\ast (\xi) 
  &= \left\{ \begin{aligned}
    &0,
    &&\text{if } \xi = 0, \\
    &\bigacc{ (\bg \cdot \xi ) \bg },
    &&\text{if } \bg \in \cG \text{ the unique maximizer of } \bg \cdot \xi, \\
    &\operatorname{co} \bigacc{ (\bg' \cdot \xi ) \bg', (\bg'' \cdot \xi ) \bg'' },
    &&\text{if } \bigacc{ \bg', \bg'' } = \argmax_{\bg \in \cG} \bg \cdot \xi.
  \end{aligned} \right.
\end{align*}

By the basic properties of the Legendre transform, we obtain
\begin{equation*}
  \psi (\dot \bx) = \frac12 \| \dot \bx \|^2,
  \qquad \Psi (\dot \bX) = \sum_{i=1}^n \psi (\dot \bx_i).
\end{equation*}
We remark that by the Euclidean structure of $(\R^2)^n$, we can identify the tangent space and its dual at any $\bX \in (\R^2)^n$ with $(\R^2)^n$. On the other hand, we do distinct in our notation between particle positions $\bX$, velocities $\dot \bX$, and forces $\bxi$, because these objects have a different interpretation. Likewise, we use the metric $\hat D$ to measure the distance between two particle configurations, $\|\cdot\|$ to measure velocities, and $\|\cdot\|_\ast$ to measure forces.

The evolution can be written equivalently as a force balance or as a power balance. An overview of these descriptions is given in \cite{Mielke14LN}. By using that $E \in C^1 ((\R^2)^n)$, \cite[Theorem~3.2]{Mielke14LN} provides a fourth equivalent description of \eqref{for:defn:rate:eqn:genl}, called the EDI. For $( (\R^2)^n, E, \Psi )$, the EDI reads
\begin{equation} \label{for:EDIhat}
E(\bZ(T)) - E(\bZ(0)) 
+ \intabx0T{ \Psi \lrhaa{ \dot \bZ(t) } + \Psi^\ast \bighaa{ - \nabla E(\bZ(t)) } }t 
\leq 0.
\end{equation}

To define a solution concept for curves $\bZ$ satisfying \eqref{for:EDIhat}, we define the space of absolutely continuous curves \cite[Def.~1.1.1]{AGS08}. For $p \in [1,\infty]$, we say that $\bX \in AC^p(0,T;(\R^2)^n)$ if there exists an $f \in L^p(0,T)$ such that
\begin{equation} \label{eqn:defn:AC}
  \bigabs{ \bX(s) - \bX(t) }_2 \leq \intabx st{f(r)}r,
  \qquad \text{for all } 0 < s \leq t < T.
\end{equation}
By \cite[Rem.~1.1.3]{AGS08}, we have for $\bX \in AC^p(0,T;(\R^2)^n)$ that its derivative is defined almost everywhere.

\begin{definition}[Solution to EDI] 
A curve $\bZ : (0,T) \to (\R^2)^n$ is a solution to the EDI if $\bZ \in AC^2(0,T;(\R^2)^n)$ satisfies \eqref{for:EDIhat}.

\end{definition}

The EDI can also be written in terms of the \emph{right metric derivative} of $\bZ (t)$ and \emph{metric slope} of $E (\bZ (t))$ (for the precise definition and basic properties, see e.g.~\cite[Chapter~1]{AGS08}). For our purposes it suffices to define them, with respect to the metric $\hat D$, respectively as
\begin{equation} \label{for:defn:metric:der:and:slope}
  |\bZ'|_{\hat D} (t) 
  := \lim_{s \downarrow t} \frac{ \hat D \bighaa{ \bZ (s), \bZ (t) } }{s-t},
  \quad \text{and} \quad
  |\partial E|_{\hat D} (\bZ) 
  := \limsup_{\bX \to \bZ} \frac{E(\bZ) - E(\bX)}{ \hat D \haa{ \bX, \bZ } }.
\end{equation}
The right metric derivative and the metric slope with respect to the `quasi-distance' $D$ are defined analogously.

Lemma \ref{lem:metric:der:and:Psi} and Lemma \ref{lem:eqy:slopes} below contain the standard relation between $\Psi$--$\Psi^\ast$ and the metric derivatives and slopes. Together they imply that the EDI \eqref{for:EDIhat} is equivalent to
\begin{equation} \label{for:EDI:ito:metric:stuff}
E(\bZ(T)) - E(\bZ(0)) 
+ \frac12 \intabx0T{ |\bZ'|^2_{\hat D} (t) + |\partial E|^2_{\hat D} (\bZ (t)) }t 
\leq 0.
\end{equation}
In addition, Lemma \ref{lem:eqy:slopes} guarantees equality of the slopes with respect to $D$ and~$\hat D$.

\subsection{Basic estimates}
\label{chap:screw:dyncs:ssec:prelim}

In the following two lemmas we prove the relation between the couple $\Psi$--$\Psi^*$ and the metric derivatives and slopes when related to $D$ and $\hat D$.
\begin{lemma} [Relation between metric derivative and $\Psi$] \label{lem:metric:der:and:Psi} 
For any $\bZ \in AC^2 \bighaa{ [0,T]; (\R^2)^n }$, it holds
\begin{equation*}
  \frac12 |\bZ'|_{\hat D}^2 (t) = \Psi \lrhaa{ \dot \bZ (t) },
  \qquad \text{for a.e.~} t \in (0, T). 
\end{equation*}
\end{lemma}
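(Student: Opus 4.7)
The plan is to observe that $\hat D$ is actually the metric induced by a norm on $(\R^2)^n$, namely $\|\bX\|_{(\R^2)^n} := \bigl( \sum_{i=1}^n \|\bx_i\|^2 \bigr)^{1/2}$, where $\|\cdot\|$ is the crystalline norm on $\R^2$. In a finite-dimensional normed space, the metric derivative of an absolutely continuous curve coincides a.e.~with the norm of its classical derivative; this reduces the lemma to a direct computation.

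First I would verify that $\bZ \in AC^2([0,T]; (\R^2)^n)$ (which is defined via the Euclidean norm in \eqref{eqn:defn:AC}) guarantees the existence of the classical derivative $\dot \bZ(t)$ for almost every $t \in (0,T)$. This is standard: since $\R^2$ is finite-dimensional, the crystalline norm $\|\cdot\|$ and the Euclidean norm $|\cdot|$ are equivalent, so the notion of absolute continuity is the same in both, and Lebesgue-type differentiation gives the existence of $\dot \bZ(t)$ a.e.

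Next, for every $t \in (0,T)$ at which $\dot \bZ(t)$ exists and for $s > t$, I would use the definition of $\hat D$ to write
\begin{equation*}
  \frac{\hat D^2 \bighaa{ \bZ(s), \bZ(t) }}{(s-t)^2}
  = \sum_{i=1}^n \biggnorm{ \frac{\bz_i(s) - \bz_i(t)}{s-t} }{}^{\,2}.
\end{equation*}
Since $\|\cdot\|$ is continuous on $\R^2$, passing to the limit $s \downarrow t$ inside each term yields
\begin{equation*}
  \lim_{s \downarrow t} \frac{\hat D^2 \bighaa{ \bZ(s), \bZ(t) }}{(s-t)^2}
  = \sum_{i=1}^n \| \dot \bz_i (t) \|^2.
\end{equation*}
By the definition \eqref{for:defn:metric:der:and:slope} of the right metric derivative, the left-hand side equals $|\bZ'|_{\hat D}^2 (t)$; by the definition \eqref{for:defn:Psi:ast:screw} of $\psi$ (together with $\psi(\dot \bx) = \tfrac12 \|\dot \bx\|^2$) and $\Psi(\dot \bX) = \sum_i \psi(\dot \bx_i)$, the right-hand side equals $2 \Psi(\dot \bZ(t))$. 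Dividing by $2$ proves the claim.

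I do not expect a genuinely hard step here; the only point that requires care is the equivalence of the Euclidean and crystalline notions of absolute continuity, which is used implicitly to transfer the a.e.~existence of $\dot\bZ$ from the definition \eqref{eqn:defn:AC} (stated in terms of $|\cdot|_2$) to the setting where distances are measured by $\hat D$. Once this equivalence is noted, the result follows from the continuity of $\|\cdot\|$ and elementary manipulations.
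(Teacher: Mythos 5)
Your proposal is correct and follows essentially the same route as the paper's proof: both use that $\hat D$ is induced by the crystalline norm componentwise, pass to the limit in the difference quotients using the a.e.~existence of $\dot\bZ$, and identify $\sum_i \|\dot\bz_i(t)\|^2$ with $2\Psi(\dot\bZ(t))$. Your explicit remark on the equivalence of the Euclidean and crystalline norms is a minor elaboration of what the paper leaves implicit.
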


\begin{proof}
Since $\hat d$ is induced by the norm $\| \cdot \|$, it holds for a.e.~$t \in (0, T)$ that
\begin{equation*}
  |\bZ'|_{\hat D}^2 (t) 
  = \lim_{s \downarrow t} \sum_{i=1}^n \frac{ \bignorm{ \bz_i (s) - \bz_i (t) }{}^2 }{ (s-t)^2 }
  = \sum_{i=1}^n \bignorm{ \dot \bz_i (t) }{}^2  
  = 2 \Psi \lrhaa{ \dot \bZ (t) }.\qedhere
\end{equation*}
\end{proof}

\begin{lemma}[Equality of slopes]  \label{lem:eqy:slopes}
For $E \in C^1 ((\R^2)^n)$, it holds
\begin{equation*}
  \frac12 |\partial E|_D^2 
  = \frac12 |\partial E|_{\hat D}^2 
  = \Psi^\ast ( \nabla E ).
\end{equation*}
\end{lemma}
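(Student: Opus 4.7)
My plan is to prove the double equality via three bounds: (i) $\frac12 |\partial E|_{\hat D}^2(\bZ) \leq \Psi^\ast(\nabla E(\bZ))$; (ii) the trivial monotonicity $|\partial E|_D \leq |\partial E|_{\hat D}$ coming from $D \geq \hat D$; and (iii) $\frac12 |\partial E|_D^2(\bZ) \geq \Psi^\ast(\nabla E(\bZ))$. Chaining these, together with (ii) run the other way via (i) and (iii), forces equality throughout. Both the upper and lower bounds will come from the first-order Taylor expansion $E(\bX) - E(\bZ) = \nabla E(\bZ) \cdot (\bX - \bZ) + o(|\bX - \bZ|)$, which is available by the $C^1$ assumption, combined with the convex duality between $\|\cdot\|$ and $\|\cdot\|_\ast$ on $\R^2$.

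For (i) I take any $\bX \to \bZ$ and estimate
\begin{equation*}
E(\bZ) - E(\bX) \leq \sum_{i=1}^n \|\nabla_{z_i} E(\bZ)\|_\ast \, \|z_i - x_i\| + o(|\bZ - \bX|),
\end{equation*}
using $|\xi \cdot \bv| \leq \|\xi\|_\ast \|\bv\|$ componentwise. A Cauchy--Schwarz step on the resulting sum bounds the right-hand side by $\sqrt{\sum_i \|\nabla_{z_i} E(\bZ)\|_\ast^2} \cdot \hat D(\bZ, \bX) + o(\hat D(\bZ, \bX))$, where I have replaced $o(|\bZ - \bX|)$ by $o(\hat D(\bZ, \bX))$ by the equivalence of $\|\cdot\|$ and $|\cdot|$ on $\R^2$. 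Dividing by $\hat D(\bZ, \bX)$ and letting $\bX \to \bZ$ gives (i). Bound (ii) is immediate: wherever $E(\bZ) - E(\bX) \leq 0$ the $D$-ratio is non-positive and contributes nothing to the $\limsup$, and wherever it is positive the inequality $D \geq \hat D$ yields $(E(\bZ) - E(\bX))/D(\bZ, \bX) \leq (E(\bZ) - E(\bX))/\hat D(\bZ, \bX)$.

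The key step is the matching lower bound (iii), which I obtain by an explicit recovery sequence along glide directions. For each $i$ pick $\bg_i^\ast \in \argmax_{\bg \in \cG} \bg \cdot \nabla_{z_i} E(\bZ)$, so that $\bg_i^\ast \cdot \nabla_{z_i} E(\bZ) = \|\nabla_{z_i} E(\bZ)\|_\ast$, and set $\bX_\varepsilon := \bZ - \varepsilon \bG$ with $\bG := (\|\nabla_{z_1}E(\bZ)\|_\ast \bg_1^\ast, \ldots, \|\nabla_{z_n}E(\bZ)\|_\ast \bg_n^\ast)$. Since each increment $x_{\varepsilon,i} - z_i$ is a scalar multiple of a single glide direction, the identity $\|\alpha \bg\| = |\alpha|$ for $\bg \in \cG$ yields
\begin{equation*}
D(\bZ, \bX_\varepsilon) = \hat D(\bZ, \bX_\varepsilon) = \varepsilon \sqrt{\sum_{i=1}^n \|\nabla_{z_i} E(\bZ)\|_\ast^2},
\end{equation*}
while the Taylor expansion gives $E(\bZ) - E(\bX_\varepsilon) = \varepsilon \sum_i \|\nabla_{z_i} E(\bZ)\|_\ast^2 + o(\varepsilon)$. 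The resulting ratio tends to $\sqrt{2\Psi^\ast(\nabla E(\bZ))}$ as $\varepsilon \downarrow 0$, which proves (iii). Because the sequence lies along admissible glide directions, it simultaneously witnesses the corresponding lower bound for $|\partial E|_{\hat D}$.

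There is no real obstacle here beyond the small technical point that the $D$-slope apparently lives on a more restricted class of perturbations than the $\hat D$-slope, so (ii) could a priori be strict; the resolution is exactly that the optimal direction in the dual pairing with $\nabla_{z_i} E(\bZ)$ is a glide direction, which collapses the two slopes to the same value. The $C^1$ regularity of $E$ is what makes the $o$-terms uniformly controllable so that this pointwise optimization passes cleanly to the limsup.
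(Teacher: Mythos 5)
Your proposal is correct and follows essentially the same route as the paper: the monotonicity $|\partial E|_D \le |\partial E|_{\hat D}$ coming from $D \ge \hat D$, combined with a recovery sequence that moves each particle along a glide direction maximizing $\bg \cdot \nabla_{\bz_i} E$, which is exactly the paper's construction. The only difference is that you also establish $\tfrac12 |\partial E|_{\hat D}^2 = \Psi^\ast(\nabla E)$ directly by Taylor expansion, the duality of $\|\cdot\|$ and $\|\cdot\|_\ast$, and Cauchy--Schwarz, whereas the paper simply cites \cite[Corollary~1.4.5]{AGS08} for that identity.
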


\begin{proof}
The second equality is given by \cite[Corollary~1.4.5]{AGS08}. For the first equality, we have $|\partial E|_D \leq |\partial E|_{\hat D}$ by the definition of $D$ and $\hat D$. For the opposite inequality, we observe
\begin{align} \label{lem:eqt:slopes:pf:1}
    |\partial E|_D (\bX) 
    = \limsup_{\bY \to \bX} \frac{\nabla E(\bX) \cdot (\bX - \bY) + o \big( |\bX - \bY|_2 \big)}{D (\bX,\bY)}.
\end{align}
Next we are going to construct a particular sequence $\bY_\varepsilon \to \bX$ as $\varepsilon \to 0$. Let
\begin{equation*}
  \bar \bg_i \in \argmax_{\bg\in\cG} \left[ \bg \cdot \nabla_i E (\bX) \right],
  \quad \by_i^\varepsilon := \bx_i + \varepsilon \left[ \bar \bg_i \cdot \nabla_i E (\bX) \right] \bar \bg_i
  \qquad i = 1,\ldots,n,
\end{equation*}
where $\nabla_i E (\bX) := \nabla_{\bx_i} E (\bX) \in \R^2$. We note that, by definition of $\|\cdot\|_\ast$, 
\begin{equation*}
  \bar \bg_i \cdot \nabla_i E (\bX) = \| \nabla_i E (\bX) \|_\ast =: u_i.
\end{equation*}
Using the explicit sequence $(\bY_\varepsilon)$ in \eqref{lem:eqt:slopes:pf:1}, we obtain that
\begin{align*} 
    |\partial E|_D (\bX) 
    &\geq \limsup_{\varepsilon \to 0} 
         \frac{ 
           \sum_{i=1}^n \bigbhaa{ \varepsilon u_i \bar \bg_i \cdot \nabla_i E(\bX) + o ( \varepsilon u_i ) }
           }{ 
           \varepsilon \bighaa{ \sum_{i=1}^n u_i^2 }^{1/2}
         } \\
    &= \limsup_{\varepsilon \to 0} 
         \frac{ 
           \varepsilon | \bu |_2^2 + o \big( \varepsilon | \bu |_2 \big)
           }{ 
           \varepsilon | \bu |_2
         }
    = |\bu|_2 = \sqrt{ 2 \Psi^\ast ( \nabla E (\bX) ) }.\qedhere
\end{align*}
\end{proof}

Now we prove two estimates that are crucial for the proof of Theorem~\ref{thm:chap:screw:dyncs}.

\begin{lemma}[Estimate on single time step MMS] \label{lem:est:MMS}
Given an energy $E$ such that $E \in C^1 ((\R^2)^n) \cap W^{1,\infty} ((\R^2)^n)$, there exists $C > 0$ such that for any $\tau > 0$, $\bX \in (\R^2)^n$ we have
\begin{equation} \label{for:lem:est:MMS}
  D(\bX, \bY) \leq C \tau,
  \quad \text{and} \quad \hat D(\bX, \hat \bY) \leq C \tau,
\end{equation}
where $\bY, \hat \bY \in (\R^2)^n$ are a minimisers of respectively $\Phi(\bX, \cdot, \tau)$ and $\hat \Phi(\bX, \cdot, \tau)$. 
\end{lemma}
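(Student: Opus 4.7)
The proof is a one-shot comparison argument, of the standard minimizing-movements flavor, and should be essentially identical for $D$ and for $\hat D$.

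The plan is to start from the minimality of $\bY$ tested against the competitor $\bX$ itself. Since $D(\bX,\bX)=0$, the inequality $\Phi(\bX,\bY,\tau)\leq \Phi(\bX,\bX,\tau)$ collapses to
\begin{equation*}
\frac{D^2(\bX,\bY)}{2\tau} + E(\bY) \leq E(\bX),
\end{equation*}
which in particular forces $D(\bX,\bY)<\infty$, so that $\bX-\by_i\in\R\cG$ for every $i$. Rearranging yields
\begin{equation*}
\frac{D^2(\bX,\bY)}{2\tau} \leq E(\bX)-E(\bY).
\end{equation*}

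Next, I would bound the right-hand side using $E\in W^{1,\infty}((\R^2)^n)$: by the mean value inequality,
\begin{equation*}
E(\bX)-E(\bY) \leq \|\nabla E\|_\infty\,|\bX-\bY|_2.
\end{equation*}
To close the loop, I would use the equivalence of the crystalline norm $\|\cdot\|$ with the Euclidean norm on $\R^2$, which holds because by \eqref{for:G:props} the set $\cG$ spans $\R^2$ and hence $\operatorname{co}\cG$ is a bounded convex neighborhood of the origin. This gives a constant $c>0$ with $|\bx|\leq c\|\bx\|$ for all $\bx\in\R^2$, hence
\begin{equation*}
|\bX-\bY|_2 \leq c\,\hat D(\bX,\bY) \leq c\,D(\bX,\bY),
\end{equation*}
where the last inequality uses that $\hat d\leq d$ pointwise (since $\hat d$ is, by construction, the largest metric dominated by $d$).

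Combining the three displays yields $D^2(\bX,\bY)\leq 2\tau c\|\nabla E\|_\infty D(\bX,\bY)$, and if $D(\bX,\bY)>0$ one divides to obtain
\begin{equation*}
D(\bX,\bY) \leq 2c\|\nabla E\|_\infty\,\tau =: C\tau,
\end{equation*}
the bound being trivial in the remaining case $D(\bX,\bY)=0$. The argument for $\hat D(\bX,\hat\bY)$ is literally the same, using that $\hat\bY$ minimizes $\hat\Phi(\bX,\cdot,\tau)$, that $\hat D(\bX,\bX)=0$, and the same norm equivalence $|\cdot|_2\leq c\hat D$. There is no genuine obstacle here; the only point requiring care is to note that minimality automatically gives $D(\bX,\bY)<\infty$ so that the quadratic-over-linear cancellation is legitimate — the mild asymmetry caused by $D$ failing the triangle inequality plays no role in this one-step estimate.
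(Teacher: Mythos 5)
Your proposal is correct and follows essentially the same route as the paper: test minimality of $\bY$ against the competitor $\bX$, rearrange to get $D^2(\bX,\bY)/(2\tau)\leq E(\bX)-E(\bY)$, bound the right-hand side by Lipschitz continuity of $E$ (the paper writes $E(\bX)-E(\bY)\leq C\,D(\bX,\bY)$ directly, which when $D(\bX,\bY)<\infty$ is the same as your Euclidean estimate since then $D(\bX,\bY)=|\bX-\bY|_2$), and cancel one factor of $D$; the $\hat D$ case is treated as analogous in both. Your extra step via the equivalence of $\|\cdot\|$ with the Euclidean norm and $\hat D\leq D$ is harmless but not needed.
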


\begin{proof}
We have
\begin{align*} 
    E \left( \bX \right)
    \geq \inf_{\bZ \in (\R^2)^n} \left[ \frac{ D^2 \left( \bX, \bZ \right) }{2\tau} + E (\bZ) \right]
    = \frac{ D^2 \left( \bX, \bY \right) }{2\tau} + E \left( \bY \right)
\end{align*}
and hence, by Lipschitz continuity of $E$, we obtain \eqref{for:lem:est:MMS} from 
\begin{align*} 
    \frac{ D^2 \left( \bX, \bY \right) }{2\tau}
    \leq E \left( \bX \right) - E \left( \bY \right)
    \leq C D \left( \bX, \bY \right).
\end{align*}

Since the proof for $\hat D$ is analogous, we omit it.
\end{proof}

\begin{lemma}[Bound on directional slope] \label{lem:dir:slope}
Let $E \in C^1 ((\R^2)^n)$ bounded from below, $\bX \in (\R^2)^n$, $\tau > 0$, and $\bY \in \argmin \Phi(\bX, \cdot ,\tau)$. Then
\[
\frac{ D ( \bX, \bY ) }{\tau} 
\geq \sup \lraccvl{ 
\lim_{\bY_\varepsilon \to \bY} \frac{ E(\bY) - E(\bY_\varepsilon) }{ D( \bY, \bY_\varepsilon ) } 
}{ 
(\by^\varepsilon_i - \bx_i) \parallelsum (\by_i - \bx_i) \text{ for all } i = 1,\ldots,n
}.
\]

\end{lemma}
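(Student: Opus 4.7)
The plan is to exploit the minimising property of $\bY$ together with the parallelism constraint, which forces the increments $\by_i - \bx_i$ and $\by_i^\varepsilon - \bx_i$ to lie on a common ray spanned by some glide direction, thereby collapsing the quasi-distance $D$ to a one-dimensional Euclidean expression on the relevant pairs. Fixing an admissible competitor $\bY_\varepsilon$ and using $\bY \in \argmin \Phi(\bX, \cdot, \tau)$, I would first rearrange $\Phi(\bX, \bY, \tau) \leq \Phi(\bX, \bY_\varepsilon, \tau)$ into
\[
E(\bY) - E(\bY_\varepsilon) \leq \frac{D^2(\bX, \bY_\varepsilon) - D^2(\bX, \bY)}{2\tau}.
\]
Competitors with $D(\bX, \bY_\varepsilon) = \infty$ yield a trivial bound, so I may restrict attention to $\bY_\varepsilon$ with $\by_i^\varepsilon - \bx_i \in \R\cG$ for all $i$.

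For indices $i$ with $\by_i \neq \bx_i$, I would write $\by_i - \bx_i = \alpha_i \bg_i$ with $\alpha_i > 0$ and $\bg_i \in \cG$. The hypothesis $(\by_i^\varepsilon - \bx_i) \parallelsum (\by_i - \bx_i)$ then forces $\by_i^\varepsilon - \bx_i = \beta_i^\varepsilon \bg_i$ for some $\beta_i^\varepsilon > 0$ (the degenerate case $\by_i = \bx_i$ is treated analogously by taking $\alpha_i := 0$ and any $\bg_i \in \cG$ compatible with the increment $\by_i^\varepsilon - \bx_i \in \R\cG$). Since $\by_i^\varepsilon - \by_i = (\beta_i^\varepsilon - \alpha_i)\bg_i \in \R\cG$ as well, the definition of $D$ yields the explicit formulas
\[
D^2(\bX, \bY_\varepsilon) - D^2(\bX, \bY) = \sum_{i=1}^n (\beta_i^\varepsilon - \alpha_i)(\beta_i^\varepsilon + \alpha_i), \qquad D^2(\bY, \bY_\varepsilon) = \sum_{i=1}^n (\beta_i^\varepsilon - \alpha_i)^2.
\]

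Combining the above with Cauchy--Schwarz applied in $\R^n$ to the first sum gives
\[
\frac{E(\bY) - E(\bY_\varepsilon)}{D(\bY, \bY_\varepsilon)} \leq \frac{1}{2\tau}\sqrt{\sum_{i=1}^n (\beta_i^\varepsilon + \alpha_i)^2}.
\]
Passing to the limit $\bY_\varepsilon \to \bY$ forces $\beta_i^\varepsilon \to \alpha_i$, so the right-hand side converges to $\frac{1}{\tau}\sqrt{\sum_{i=1}^n \alpha_i^2} = \frac{D(\bX, \bY)}{\tau}$. Taking the supremum over all admissible sequences then produces the claimed bound. The only nontrivial point is the observation that the parallelism constraint is exactly what makes both squared distances reduce to Euclidean expressions along the selected glide rays; without it, $D$ would not be amenable to the Cauchy--Schwarz step and the one-sided gradient-like estimate would fail.
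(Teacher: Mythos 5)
Your proof is correct and follows essentially the route the paper intends: it uses the minimality of $\bY$ in $\Phi(\bX,\cdot,\tau)$ and the fact that the parallelism constraint reduces each component of $D$ to a scalar quantity along a fixed glide direction, which is exactly how the paper's cited argument (the proof of Lemma~3.1.3 in \cite{AGS08}) carries over to the quasi-distance $D$. Your use of Cauchy--Schwarz on the factored difference of squares is only a cosmetic variant of the triangle-inequality factoring $D^2(\bX,\bY_\varepsilon)-D^2(\bX,\bY)\leq D(\bY,\bY_\varepsilon)\bighaa{D(\bX,\bY_\varepsilon)+D(\bX,\bY)}$, which the componentwise collinearity restores here.
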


\begin{remark} \label{rem:main:diff:AGS}
Lemma \ref{lem:dir:slope} follows directly from the proof of Lemma~3.1.3 from \cite{AGS08}. We interpret the right-hand side of the inequality in Lemma \ref{lem:dir:slope} as a `directional slope'. It is clear from the definition of the slope \eqref{for:defn:metric:der:and:slope} that the directional slope is smaller than or equal to the `standard' slope $|\partial E|_D (\bY)$.

In fact, \cite[Lemma~3.1.3]{AGS08} states that if $D$ were a metric (i.e.~satisfies the triangle inequality), then a stronger inequality than the one in Lemma \ref{lem:dir:slope} would hold, in which the right-hand side is replaced by $|\partial E|_D (\bY)$. Such a stronger inequality is required in \cite{AGS08} to prove convergence of the MMS to the EDI.

Since $D$ is not a metric, we cannot apply this stronger inequality. It is not hard to construct an example in which $|\partial E|_D (\bY)$ is indeed larger than the `directional' slope. The bulk of our proof of the main result (Theorem \ref{thm:chap:screw:dyncs}) concerns an alternative argument to \cite[Lemma~3.1.3]{AGS08}.
\end{remark}

\section{Convergence of the $D$-MMS to the EDI}
\label{chap:screw:dyncs:sec:thm:and:pf}

We now prove the main result of the paper.
\begin{theorem}\label{thm:chap:screw:dyncs}
Let $T > 0$ be an end time, $\bZ^0 \in (\R^2)^n$ an initial condition, and $E \in C^1 ((\R^2)^n)$ be an energy such that $E$ and $\nabla E$ are bounded and uniformly continuous. For any time step $\tau > 0$, let $(\bZ_\tau^k)_{k \in \N{}}$ be a solution to the $D$-MMS \eqref{for:MMS:D} and $\overline \bZ_\tau : [0, T] \to (\R^2)^n$ the corresponding step function defined in \eqref{for:defn:Zbar}. Then, along a subsequence of $\tau \to 0$, the curves $\overline \bZ_\tau(t)$ converge pointwise for all $t \in [0, T]$ to an absolutely continuous curve $\hat \bZ : [0, T] \to (\R^2)^n$ which satisfies the EDI \eqref{for:EDI:ito:metric:stuff}.
\end{theorem}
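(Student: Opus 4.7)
The overall strategy is the classical minimising-movement roadmap of \cite[Chapters~2 and 3]{AGS08}: (i) \emph{a priori} estimates on the discrete trajectory; (ii) compactness and extraction of a limit curve $\hat\bZ$; (iii) passage to the limit in a discrete EDI. The non-standard point, following Remark~\ref{rem:main:diff:AGS}, is that $D$ is not a metric, so the usual argument for bounding the slope term via the De Giorgi interpolant must be replaced.

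First, from Lemma~\ref{lem:est:MMS} and the minimality defining $\bZ^{k+1}_\tau$ one gets the discrete energy inequality
\[
  \frac{D^2(\bZ_\tau^k,\bZ_\tau^{k+1})}{2\tau}+E(\bZ_\tau^{k+1})\le E(\bZ_\tau^k),
\]
which telescoped yields $\sum_{k=0}^{\lceil T/\tau\rceil-1} D^2(\bZ_\tau^k,\bZ_\tau^{k+1})/\tau \le 2\,(E(\bZ^0)-\inf E)$. Because $\hat D\le D$ and $\hat D$ is a genuine distance (induced by the crystalline norm, hence equivalent to the Euclidean norm on $(\R^2)^n$), this bound plus the Cauchy--Schwarz trick used in \cite[\S~3.1]{AGS08} gives equi-Hölder continuity of $\overline\bZ_\tau$ with respect to $\hat D$ on $[0,T]$. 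A standard Arzelà--Ascoli/refined Ascoli argument then extracts a subsequence $\tau\to 0$ and a limit $\hat\bZ\in AC^2(0,T;(\R^2)^n)$ such that $\overline\bZ_\tau(t)\to\hat\bZ(t)$ pointwise, with $\int_0^T|\hat\bZ'|^2_{\hat D}\,dt\le \liminf_\tau \sum_k D^2(\bZ_\tau^k,\bZ_\tau^{k+1})/\tau$ by the lower semicontinuity of the action.

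Second, I combine the telescoped inequality with the De Giorgi interpolant $\bZ^\Gamma_\tau$ of \eqref{for:defn:De:Giorgi:itplt} to get, by the standard computation,
\[
  E(\overline\bZ_\tau(T))+\tfrac12\!\int_0^T\!\!\frac{D^2(\bZ_\tau^{k_\tau(t)},\overline\bZ_\tau(t))}{\tau^2}\,dt+\tfrac12\!\int_0^T\!\!\frac{D^2(\bZ_\tau^{k_\tau(t)},\bZ^\Gamma_\tau(t))}{\delta(t)^2}\,dt\le E(\bZ^0).
\]
The first term passes to $E(\hat\bZ(T))$ by continuity of $E$; the second term, together with the lower semicontinuity from the previous paragraph and Lemma~\ref{lem:metric:der:and:Psi}, yields $\int_0^T\Psi(\dot{\hat\bZ}(t))\,dt$ in the limit. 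The essential step is therefore to establish
\[
  \liminf_{\tau\to 0}\tfrac12\!\int_0^T\!\!\frac{D^2(\bZ_\tau^{k_\tau(t)},\bZ^\Gamma_\tau(t))}{\delta(t)^2}\,dt\ge\int_0^T\!\Psi^\ast\bigl(-\nabla E(\hat\bZ(t))\bigr)\,dt,
\]
which by Lemma~\ref{lem:eqy:slopes} is the desired slope term.

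The main obstacle is precisely this last bound, because the usual proof (\cite[Lemma~3.1.3]{AGS08}) uses the triangle inequality of $D$, which we do not have. Lemma~\ref{lem:dir:slope} still provides the ``directional slope'' inequality
\[
  \frac{D(\bX,\bY)}{\tau}\ \ge\ \sup\Bigl\{\lim_{\bY_\varepsilon\to\bY}\tfrac{E(\bY)-E(\bY_\varepsilon)}{D(\bY,\bY_\varepsilon)}\ :\ (\by^\varepsilon_i-\bx_i)\parallel (\by_i-\bx_i)\Bigr\},
\]
i.e.\ the slope is controlled only along glide directions that are actually used in the step. My plan for closing the gap is to exploit the structure of $\cG$ and the uniform continuity of $\nabla E$: writing $\bY=\bZ^\Gamma_\tau(t)$, $\bX=\bZ_\tau^{k_\tau(t)}$, the step $\by_i-\bx_i$ lies in some $\Lambda_{\bg_i^\tau}$ with $\bg_i^\tau\in\cG$; test the directional slope with $\bY_\varepsilon=\bY-\varepsilon\,[\bg_i^\tau\!\cdot\!\nabla_i E(\bY)]\bg_i^\tau$. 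This produces the bound $D(\bX,\bY)/\tau\ge \bigl(\sum_i [\bg_i^\tau\!\cdot\!\nabla_i E(\bY)]_+^2\bigr)^{1/2}$. I then argue that along the chosen subsequence the glide direction $\bg_i^\tau$ used at time $t$ is, up to $o(1)$, a maximizer of $\bg\cdot(-\nabla_i E(\bZ_\tau^{k_\tau(t)}))$ over $\cG$ (this follows from the Euler--Lagrange condition for the minimiser, since $\bZ^{k+1}_\tau-\bZ^k_\tau$ is along the direction that is most efficient for decreasing $E$). Combined with the uniform continuity of $\nabla E$ and the vanishing of $D(\bX,\bY)$, one gets $\bg_i^\tau\!\cdot\!\nabla_i E(\bY)\to -\|\nabla_i E(\hat\bZ(t))\|_\ast$, which upon squaring and passing to the liminf under the integral (Fatou) delivers $2\Psi^\ast(-\nabla E(\hat\bZ(t)))$. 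Adding the three limits gives the EDI \eqref{for:EDI:ito:metric:stuff} for $\hat\bZ$, completing the argument.
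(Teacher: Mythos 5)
Your proposal follows essentially the same route as the paper's proof: compactness of $\overline\bZ_\tau$ via Lemma \ref{lem:est:MMS}, the discrete EDI built from the De Giorgi interpolant, and---crucially---replacing \cite[Lemma~3.1.3]{AGS08} by Lemma \ref{lem:dir:slope} together with the observation that the glide direction selected by the incremental minimiser is, up to $o(1)$, a maximiser of descent, so that the directional slope converges to the full dual-norm slope $2\Psi^\ast(-\nabla E(\hat\bZ(t)))$ and Fatou closes the argument. The only difference is one of detail: the paper proves this last characterisation explicitly (Step~3: Taylor expansion of the incremental problem, minimisation over $\alpha_i\geq 0$, and a stability/case analysis over the cones $\Lambda_{\bg}$, including the two-maximiser fine cross-slip case), whereas you invoke it through an ``Euler--Lagrange'' shorthand; the substance and the structure of the argument coincide.
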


\begin{proof} We split the proof in four steps. In Step~1 we use a refined version of Ascoli-Arzel\`a to identify the limiting curve $\tilde \bZ \in AC^2 ([0, T]; (\R^2)^n)$ for a subsequence of $\overline \bZ_\tau$ as $\tau \to 0$. Step~2 is a consequence of Step~1, in which we precisely state in what sense the interpolants $\overline \bZ_\tau$ and $\bZ_\tau^\Gamma$ \eqref{for:defn:De:Giorgi:itplt} converge. Step~3 is the main novelty of this paper, in which we provide an alternative argument to \cite[Lemma~3.1.3]{AGS08} which allows us in Step~4 to pass to the limit as $\tau \to 0$ to obtain that term in the EDI which is related to the slope $|\partial E|_{\hat D}$. In Remark \ref{rem:main:diff:AGS} we discuss the relation between \cite[Lemma~3.1.3]{AGS08} and the weaker result in Lemma \ref{lem:dir:slope}.

\smallskip
\emph{Step 1: Compactness of the step functions $\overline \bZ_\tau$.} \\ 
We prove that $\overline \bZ_\tau$ converges point-wise in time along a subsequence to some 
$$
\tilde \bZ \in AC^2 \big( [0, T]; (\R^2)^n \big).
$$
\cite[Proposition~3.3.1]{AGS08} guarantees the existence of $\tilde \bZ \in C \big( [0, T]; (\R^2)^n \big)$ provided that there exists a symmetric function $\omega \in [0, T]^2 \to [0, \infty)$ satisfying
\begin{equation} \label{for:omega:recs}
  \limsup_{\tau \to 0} \hat D \left( \overline \bZ_\tau (s), \overline \bZ_\tau (t) \right)
  \leq \omega(s,t) 
  \xrightarrow{|s-t| \to 0} 0.
\end{equation}
We prove \eqref{for:omega:recs} by estimating $\hat D \left( \overline \bZ_\tau (s), \overline \bZ_\tau (t) \right)$ for arbitrary $s,t \in [0,T]$, where $s \leq t$ without loss of generality. Let $K = \lceil s/\tau \rceil$ and $L = \lceil t/\tau \rceil$, and note that $s \in ( (K-1) \tau, K \tau ]$, $t \in ( (L-1) \tau, L \tau ]$. Then by the definition of $\overline \bZ_\tau$ \eqref{for:defn:Zbar}, we have $\overline \bZ_\tau (s) = \bZ_\tau^K$ and $\overline \bZ_\tau (t) = \bZ_\tau^L$, and we estimate
\begin{align*} 
   \hat D \left( \overline \bZ_\tau (s), \overline \bZ_\tau (t) \right)
   = \hat D \left( \bZ_\tau^K, \bZ_\tau^L \right)
   \leq \sum_{k = K}^{L-1} \hat D \left( \bZ_\tau^k, \bZ_\tau^{k+1} \right).
\end{align*}
We continue the estimate by using Lemma \ref{lem:est:MMS} to obtain
\begin{align} \nonumber
     \hat D \left( \overline \bZ_\tau (s), \overline \bZ_\tau (t) \right)
     &\leq \sum_{k = K}^{L-1} C \tau 
     = C (L-K) \tau
     \leq C \tau \big( \lceil t/\tau \rceil - \lceil s/\tau \rceil \big) \\\label{eqn:est:Zbar:st}
     &\leq C \tau \left( \frac t\tau - \frac s\tau + 1 \right)
     = C (t - s) + C \tau,
\end{align}
from which we conclude that \eqref{for:omega:recs} is satisfied for $\omega(s, t) = C |t-s|$. In the sequel, we proceed with the subsequence provided by \cite[Proposition~3.3.1]{AGS08} without changing notation. 

We prove that $\tilde \bZ$ is absolutely continuous by showing that \eqref{eqn:defn:AC} holds for some constant function $f$. From $\overline \bZ_\tau (t) \to \tilde \bZ (t)$ for a.e.~$t \in (0, T)$ and \eqref{eqn:est:Zbar:st}, we deduce that
\begin{equation} \label{eqn:Ztilde:AC}
  \hat D \lrhaa{ \tilde \bZ (s), \tilde \bZ (t) }
  = \lim_{\tau \to 0} \hat D \lrhaa{ \overline \bZ_\tau (s), \overline \bZ_\tau (t) }
  \leq \int_s^t C,
  \qquad \text{for a.e.~} 0 < s < t < T.
\end{equation}
Since $\tilde \bZ$ is continuous, we conclude that \eqref{eqn:Ztilde:AC} holds for all $0 \leq s \leq t \leq T$.

\smallskip
\emph{Step 2: Convergence of interpolants $\overline \bZ_\tau$ and $\bZ_\tau^\Gamma$ \eqref{for:defn:De:Giorgi:itplt}.} \\
We fix $t \in [0, T]$, and let $k_\tau = \ceil{t/\tau} - 1$ as in \eqref{for:def:itau}. The compactness result in Step~1 implies directly that $\hat D (\bZ_{k_\tau + 1}^\tau, \tilde \bZ (t)) \to 0$ pointwise in $t$ as $\tau \to 0$. Then from Lemma \ref{lem:est:MMS} and \eqref{for:est:on:ZGamma} we also have 
\begin{equation*}
  \hat D \bighaa{ \bZ^{k_\tau}_\tau, \tilde \bZ (t)}
  + \hat D \bighaa{ \bZ_\tau^\Gamma (t), \tilde \bZ (t)} 
  \xto{ \tau \to 0 } 0.
\end{equation*}
As a result of this and $\nabla E$ being continuous, we have that $\nabla E$ evaluated at $\bZ^{k_\tau}_\tau$, $\bZ^{k_\tau + 1}_\tau$ and $\bZ_\tau^\Gamma (t)$ converge to $\nabla E (\tilde \bZ (t))$ as $\tau \to 0$.
 
\smallskip
\emph{Step 3: Convergence of directional derivative to slope.}  \\
We fix an arbitrary $t \in [0,T]$ and set $k_\tau$ as in \eqref{for:def:itau}. Let 
\begin{equation} \label{eqn:defn:GGamma}
  \bG^\Gamma = \lracc{ \bg^\Gamma_1, \ldots, \bg^\Gamma_n } \in \cG^n 
  \quad \text{be such that} \quad 
  \bg_i^\Gamma \parallelsum \bigbhaa{ \bZ_\tau^\Gamma (t) - \bZ^{k_\tau}_\tau }_i \text{ for all } i = 1,\ldots,n.
\end{equation} 
We prove in this step that
\begin{equation} \label{thm:MMS:to:EDI:pf:grad:conv}
  \sum_{i=1}^n \left[ \bg^\Gamma_i \cdot \nabla_i E (\bZ_\tau^\Gamma (t)) \right]^2
  \xrightarrow{ \tau \to 0 }
  \abs{ \partial E }_{\hat D}^2 (\tilde \bZ (t)).
\end{equation}
From the proof of Lemma \ref{lem:eqy:slopes} and the definition of $\Psi^\ast$ \eqref{for:defn:Psi:ast:screw}, it follows that \eqref{thm:MMS:to:EDI:pf:grad:conv} is implied by the claim
\begin{equation} \label{thm:MMS:to:EDI:pf:grad:conv:i}
  \left[ \bg^\Gamma_i \cdot \nabla_i E (\bZ_\tau^\Gamma (t)) \right]^2
  \xrightarrow{ \tau \to 0 }
  \max_{\bg \in \cG} \bigbhaa{ \bg \cdot \nabla_i E (\tilde \bZ (t)) }^2,
  \qquad \text{for all } i = 1,\ldots, n.
\end{equation}
For proving \eqref{thm:MMS:to:EDI:pf:grad:conv:i}, we set $\ba_i := \nabla_i E (\tilde \bZ (t))$ and $\overline \bg_i$ as a minimiser of $-\bg \cdot \ba_i$. If $\ba_i = \bzero$, then \eqref{thm:MMS:to:EDI:pf:grad:conv:i} follows directly from Step~2.

The main part of the proof of \eqref{thm:MMS:to:EDI:pf:grad:conv:i} for $\ba_i \neq \bzero$ is to characterise $\bG^\Gamma$. We recall from \eqref{for:defn:De:Giorgi:itplt} that
\begin{equation} \label{thm:MMS:to:EDI:pf:DGitplt:1}
  \bZ^\Gamma_\tau (t) \in \argmin_{\bX \in (\R^2)^n} \lrbhaa{
    \frac1{2 \delta} D^2 (\bZ_\tau^{k_\tau}, \bX) + E(\bX)  
  },
\end{equation}
where $\delta = t - \tau k_\tau \in (0, \tau]$. Since $D$ is only finite on $\R{} \cG$, we can restrict the minimisation over $\bX = (\bx_1, \ldots, \bx_n)^T$ to $\bx_i = \bz_{i, \tau}^{k_\tau} + \alpha_i \bg_i$ with $\alpha_i \geq 0$ and $\bg_i \in \cG$. Thanks to Lemma \ref{lem:est:MMS} we can assume that $\alpha_i \leq C \delta$. At the same time, we expand $E(\bX)$ in a Taylor series around $\bZ_\tau^{k_\tau}$. By these arguments, we rewrite the minimisation problem in \eqref{thm:MMS:to:EDI:pf:DGitplt:1} as
\begin{equation} \label{thm:MMS:to:EDI:pf:DGitplt:2}
  E(\bZ_\tau^{k_\tau}) + \min_{\bg_i \in \cG} \min_{\alpha_i \geq 0} \sum_{i=1}^n \lrbhaa{
    \frac1{2 \delta} \alpha_i^2 + \alpha_i \bg_i \cdot \nabla_i E (\bZ_\tau^{k_\tau}) + o (\delta)
  }.
\end{equation}
We recall from \eqref{thm:MMS:to:EDI:pf:DGitplt:1} that $\bg_i^\Gamma$ is a minimiser of this minimisation problem. We characterise this minimiser by solving \eqref{thm:MMS:to:EDI:pf:DGitplt:1} first under the assumption that $o (\delta) = 0$. Minimising \eqref{thm:MMS:to:EDI:pf:DGitplt:2} over $\alpha_i \geq 0$ yields
\begin{equation} \label{thm:MMS:to:EDI:pf:DGitplt:3}
  E(\bZ_\tau^{k_\tau}) - \frac\delta2 \sum_{i=1}^n 
    \Bighaa{ \min_{\bg_i \in \cG} \, \bg_i \cdot \nabla_i E (\bZ_\tau^{k_\tau}) }^2.
\end{equation}
We treat the minimisation within parentheses for each $i = 1, \ldots, n$ separately. For convenience we drop the index $i$ in our notation whenever possible. We split characterizing $\bg^\Gamma$ in two cases: (i) $-\bg \cdot \ba$ has a unique minimiser $\overline \bg$, and (ii) $-\bg \cdot \ba$ has exactly two minimiser $\overline \bg^1$ and $\overline \bg^2$.

In case (i), it holds that $\ba$ is an element of the cone $\Lambda_{\overline \bg}$ \eqref{for:defn:Lambda}. Hence, there exists an $r > 0$ such that $B(\ba, r) \subset \subset \Lambda_{\overline \bg}$. Then, by Step~2, it holds that $\nabla_i E (\bZ_\tau^{k_\tau}) \in B(\ba, r)$ for all $\tau$ small enough. Hence, $\overline \bg$ is the unique minimiser of $\bg \cdot \nabla_i E (\bZ_\tau^{k_\tau})$, and, furthermore, there exist a $c > 0$ independent of $\tau$ such that
\begin{equation*}
  \Bighaa{ \min_{\bg \in \cG \setminus \acc{ \overline \bg } } \, (\bg  - \overline \bg) \cdot \nabla_i E (\bZ_\tau^{k_\tau}) } \geq c > 0.
\end{equation*}
Hence, the minimiser of $\bg \cdot \nabla_i E (\bZ_\tau^{k_\tau})$ is stable under perturbations of the form $o (1)$, and thus $\overline \bg_i$ minimises the term in square brackets in \eqref{thm:MMS:to:EDI:pf:DGitplt:2}. Hence, $\bg^\Gamma_i = \overline \bg_i$, from which we conclude that \eqref{thm:MMS:to:EDI:pf:grad:conv:i} holds in case (i).

In case (ii), it holds that $\ba$ is an element of the cone $\Lambda$ spanned by $\overline \bg^1$ and $\overline \bg^2$. An analogous argument as used in case (i) implies that $\bg \cdot \nabla_i E (\bZ_\tau^{k_\tau})$ is minimised by $\overline \bg^1$ or $\overline \bg^2$, and, furthermore, there exist a $c > 0$ independent of $\tau$ such that
\begin{equation*}
  \Bighaa{ \min_{\bg \in \cG \setminus \acc{ \overline \bg^1, \overline \bg^2 } } \, (\bg  - \overline \bg^j) \cdot \nabla_i E (\bZ_\tau^{k_\tau}) } \geq c > 0,
  \qquad \text{for } j = 1,2.
\end{equation*}
Hence, either $\overline \bg^1$ or $\overline \bg^2$ minimises the term in square brackets in \eqref{thm:MMS:to:EDI:pf:DGitplt:1}. This implies $\bg^\Gamma \in \acc{ \overline \bg^1, \overline \bg^2 }$, and \eqref{thm:MMS:to:EDI:pf:grad:conv:i} follows from the observation that
\begin{equation*}
  \overline \bg^j \cdot \nabla_i E (\bZ_\tau^\Gamma (t))
  \xrightarrow{ \tau \to 0 }
  \overline \bg^j \cdot \ba
  = \min_{\bg \in \cG} \bg \cdot \ba,
  \qquad \text{for } j = 1,2.
\end{equation*}

\smallskip
\emph{Step 4: $\tilde \bZ$ satisfies the EDI.} \\
To simplify the proof, we assume that $\bZ_\tau^k$ and $\bZ^\Gamma_\tau$ are uniquely defined by \eqref{for:MMS:D} and \eqref{for:defn:De:Giorgi:itplt}. In the general case we should take into account the supremum and the infimum of $D^2(\bZ^\Gamma_\tau(t), \bZ_\tau^{k_\tau})$ over all possible instances of $\bZ^\Gamma_\tau(t)$. This is done in \cite[Chapter~3]{AGS08}, in which all equalities below become inequalities. 
 
Following the lines of \cite[Theorem~3.1.4]{AGS08}, we obtain for a fixed time step $\tau > 0$ that
\begin{equation*}
\frac{ D^2 \lrhaa{ \bZ_\tau^{k}, \bZ_\tau^{k+1} } }{2\tau} 
+ \intabx{\tau k}{\tau (k + 1)}{ \frac{D^2 \bighaa{ \bZ^\Gamma_\tau(t), \bZ_\tau^{k} } }{2(t - \tau k)^2} }t 
= E(\bZ_\tau^{k}) - E(\bZ_\tau^{k+1}), 
\qquad\quad\text{for } k = 0, \ldots, \ceil{T / \tau} - 1.
\end{equation*}
Summation over $k$ results in
\begin{equation} \label{thm:MMS:to:EDI:pf:disc:EDI}
\sum_{k=0}^{\lceil T/\tau \rceil - 1 }\frac{D^2(\bZ_\tau^k,\bZ_\tau^{k+1})}{2\tau} 
+ \intabx0{\lceil T/\tau \rceil \tau }{ \frac{D^2 \bighaa{ \bZ^\Gamma_\tau(t), \bZ_\tau^{ \floor{ t/\tau } } } }{ 2 (t - \floor{t / \tau} \tau )^2 } }t 
= E(\bZ^0) -  E \bighaa{ \bZ_\tau^{\lceil T/\tau \rceil} }.
\end{equation}

Next we show how to pass to the limit in the terms of \eqref{thm:MMS:to:EDI:pf:disc:EDI} as $\tau \to 0$. We set $k_\tau (t) = \ceil{t/\tau} - 1$ as in \eqref{for:def:itau}. By 
Step~2, the right-hand side of \eqref{thm:MMS:to:EDI:pf:disc:EDI} converges to the difference between the energy evaluated at $\tilde \bZ (T)$ and $\bZ^0$. Regarding the first term in the left-hand side of \eqref{thm:MMS:to:EDI:pf:disc:EDI}, we use the estimate \cite[(3.3.10)]{AGS08} to obtain that
\begin{align*}
  \liminf_{\tau \to 0} \tau \sum_{k=0}^{k_\tau(T)} \frac{ D^2(\bZ_\tau^k,\bZ_\tau^{k+1}) }{ 2\tau^2 } 
&= \liminf_{\tau \to 0} \intabx0{ \ceil{T/\tau} \tau }{ \frac{ D^2 \bighaa{ \bZ_\tau^{k_\tau(t)}, \bZ_\tau^{k_\tau(t)+1} } }{ 2\tau^2 } }t  
\geq \frac12 \intabx0T{ \lrabs{ \tilde \bZ' }_{\hat D}^2(t) }t.  
\end{align*}
We prove the convergence of the second term in the left-hand side of \eqref{thm:MMS:to:EDI:pf:disc:EDI} as follows. We fix $t \in [0, T]$ and $\tau > 0$. We set $G^\Gamma$ as in \eqref{eqn:defn:GGamma} and $u_i := \bg_i^\Gamma \cdot \nabla_i E (\bZ_\tau^\Gamma (t))$. We obtain from Lemma \ref{lem:dir:slope} that
\begin{equation*}
  \frac{ D \bighaa{ \bZ^\Gamma_\tau(t), \bZ_ \tau^{k_\tau (t)} } }{ t - \tau k_\tau (t) }
  \geq \limsup_{\bepsilon \to 0} \frac1{|\bepsilon|_2} \sum_{i=1}^n \varepsilon_i \bg_i^\Gamma \cdot \nabla_i E (\bZ_\tau^\Gamma (t))
  = \limsup_{\bepsilon \to 0} \frac{ \bepsilon \cdot \bu }{|\bepsilon|_2}
  = |\bu|_2.
\end{equation*}
In Step~3 we prove that $|\bu|_2^2 \to \abs{ \partial E }_{\hat D}^2 (\tilde \bZ (t))$ as $\tau \to 0$. Hence, by applying Fatou's Lemma, we obtain
\begin{align*}
\liminf_{\tau \to 0} \frac12 \intabx0T{ \frac{ D^2 \bighaa{ \bZ^\Gamma_\tau(t), \bZ_ \tau^{\floor{ t/\tau } } } }{ (t - \floor{t / \tau} \tau )^2 } }t 
&\geq \frac12 \intabx0T{ |\partial E |_{\hat D}^2 \bighaa{ \tilde \bZ(t) } }t.    
\end{align*}
Combining the results above, we obtain after passing to the limit $\tau \to 0$ in \eqref{thm:MMS:to:EDI:pf:disc:EDI}
\[
\frac12 \intabx0T{ | \tilde \bZ '|_{\hat D}^2(t) }t 
+ \frac12 \intabx0T{ |\partial E |_{\hat D}^2 \bighaa{ \tilde \bZ(t) } }t 
\leq E(\bZ^0) - E(\tilde \bZ(T)),
\]
from which we conclude by Lemma \ref{lem:metric:der:and:Psi} and Lemma \ref{lem:eqy:slopes} that $\tilde \bZ$ satisfies the EDI.
\end{proof}

\section{Generalisations of Theorem \ref{thm:chap:screw:dyncs}}
\label{sec:extn}

Here we discuss in more detail two of the three remarks on Theorem \ref{thm:chap:screw:dyncs} that are mentioned in Section \ref{sec:dislo_res_com}.

\subsection{Extension to higher dimensions}
\label{ssec:extn}

Theorem \ref{thm:chap:screw:dyncs} can be generalised to higher dimensions for the particle positions. We show this by considering $n$ particles with positions $\bz_i \in \R^d$ with $d \geq 2$. Since the setting is analogous to the two-dimensional scenario, we keep the same notation, and only mention the important differences.

The main difference in higher dimensions is that we consider the set of glide directions given by
\begin{equation*} \label{for:defn:cG:genzd}
  \cG := \acc{ \bg_1, \ldots, \bg_N } \subset \mathbb S^{d-1},
\end{equation*}
which satisfies the two properties 
\begin{equation*} \label{for:G:props:genzd}
  \bg \in \cG \rar -\bg \in \cG,
  \quad \text{and} \quad
  \markops{\cG} = \R^d. 
\end{equation*}
The related crystalline norm reads
\begin{equation} \label{for:defn:cryst:norm:genzd}
 \| \bx \| := \min \biggaccv{ 
 \sum_{k=1}^N \alpha_k 
 }{
  \alpha_k \geq 0 \text{ such that } \: \sum_{k=1}^N \alpha_k \bx_k = \bx
  }.
\end{equation}
Next we show that the dual norm can be characterised by
\begin{equation} \label{for:defn:dual:cryst:norm:genzd}
 \| \bx \|_\ast 
 := \max_{\by \in \R^d \setminus \acc 0} \frac{ \bx \cdot \by }{ \norm{\by}{} }
 = \max_{\bg \in \cG} \bg \cdot \bx.
\end{equation}
Since the maximum is taken over a smaller space in the right-hand side of \eqref{for:defn:dual:cryst:norm:genzd}, it suffices to prove that it is larger than or equal to the left-hand side. To this aim, let $\by$ be a maximiser of the left-hand side, and let $\alpha_k \geq 0$ as in \eqref{for:defn:cryst:norm:genzd}. We conclude by estimating
\begin{align*}
  \| \bx \|_\ast 
 = \frac{ \bx \cdot \by }{ \norm{\by}{} }
 = \bigghaa{ \sum_{k=1}^N \alpha_k }^{-1} \sum_{k=1}^N \alpha_k \bg_k \cdot \bx
 \leq \max_{\bg \in \cG} \bg \cdot \bx.
\end{align*}
In terms of the crystalline norm and its dual we define the distances $D$ and $\hat D$ and the related $\Psi$ and $\Psi^\ast$ analogously to the two-dimensional setting. With these objects, the MMS \eqref{for:MMS:D} and EDI \eqref{for:EDIhat} are defined analogously.

It is readily checked that the basic estimates in Section \ref{chap:screw:dyncs:ssec:prelim} and most steps in the proof of Theorem \ref{thm:chap:screw:dyncs} hold in the $d$-dimensional case by analogous arguments. The only part where the extension to $d$ requires a modification is for the characterisation of the minimiser of the term within parenthesis in \eqref{thm:MMS:to:EDI:pf:DGitplt:3} in Step 3 in the proof of Theorem \ref{thm:chap:screw:dyncs}. The remainder of this section describes this modification.

First, we define the cone $\Lambda_\bg$ as an extension of \eqref{for:defn:Lambda} by 
\begin{equation} \label{for:Lambda:prop:genzd}
  \Lambda_\bg
  := 
  \biggaccv{ 
\bx \in \R^d
}{
\bg \cdot \bx > \max_{\tilde \bg \in \cG \setminus \acc \bg} \tilde \bg \cdot \bx
}.
\end{equation}
We remark that $\acc{ \Lambda_\bg \cap \mathbb S^{d-1} }_{\bg \in \cG}$ describes the Voronoi tessellation of $S^{d-1}$ with respect to $\cG$.

With \eqref{for:Lambda:prop:genzd} it is easy to see that the characterisation of the minimiser in \eqref{thm:MMS:to:EDI:pf:DGitplt:3} can be done with an analogous argument when $\ba \in \Lambda_\bg$ for some $\bg \in \cG$. If $\ba \notin \cup_{\bg \in \cG} \Lambda_\bg$, then 
\begin{equation*}
  \Bigabs{ \argmax_{\bg \in \cG} \bg \cdot \ba } \geq 2,
\end{equation*}
while in the two-dimensional scenario the left-hand side equals $2$. In any case, the argument in Step~3 also holds when more than $2$ glide directions are considered.

\subsection{Annihilation of dislocations in a variational framework}
\label{chap:screw:dyncs:sec:NnC}

With Theorem~\ref{thm:chap:screw:dyncs} we proved that the time-continuous model in \cite{CermelliGurtin99} can be obtained as the limit of the time-discrete $D$-MMS schemes when the time step converges to $0$. We would like to address 
an open problem regarding annihilation of dislocations.

%

By regularizing the energy to prevent the minimising-movement scheme in \eqref{for:MMS:D:intro} to jump in the first time step to a state in which the energy equals $- \infty$, we remove the possibility for dislocations to annihilate or to be absorbed at $\partial \Omega$. This regularization of the energy is therefore artificial. Indeed, the evolution defined by \eqref{for:evo:screw:BFLM} can easily be modified to allow for annihilation; whenever two dislocations annihilate (or one dislocation leaves the domain $\Omega$), the evolution can be restarted by removing the annihilated dislocations from the equation, and taking the current positions of the other dislocations as the new initial condition. This raises the following question: is it possible to modify the MMS to describe such dynamics which allow for annihilation in a variational framework?

In \cite{AlicandroDeLucaGarroniPonsiglione14} the minimum in \eqref{for:MMS:D:intro} is taken over a $\tau$-independent neighbourhood $\cN$ of $\bZ_\tau^k$. With this strategy, the energy does not need to be regularized, but the MMS breaks down whenever the difference between any pair of opposite dislocations is in~$\cN$.

One idea to make this approach work, is to make $\cN$ dependent on $\tau$, such that it shrinks to a singleton as $\tau \to 0$, but at a slower rate than $\tau$ (e.g., $\sqrt \tau$). 

The main challenge is to treat the time step in which annihilation takes place. From the procedure above, it seems reasonable to restart the MMS with a new energy (consisting of fewer dislocations) while keeping fixed the positions of the dislocations which were not annihilated. We expect that passing to the limit $\tau \to 0$, if possible, requires serious modifications to the argument in \cite[Chapter~2 and 3]{AGS08}.

\bigskip
\noindent\textbf{Acknowledgements} The authors warmly thank the Centre for Analysis, Scientific computing and Applications (CASA) of the Eindhoven University of Technology, Eindhoven, The Netherlands and SISSA, Trieste, Italy, where this research was carried out.
G.A.B.\@ and kindly acknowledge support from the Nederlandse Organisatie voor Wetenschappelijk Onderzoek (NWO) VICI grant 639.033.008.
P.vM.\@ kindly acknowledges the financial support from the NWO Complexity grant 645.000.012.
The research of M.M.\@ was partially supported by the European Research Council through the ERC Advanced Grant ``QuaDynEvoPro'', grant agreement no.\@ 290888. M.M.\@ is a member of the Progetto di Ricerca GNAMPA-INdAM 2015 ``Fenomeni critici nella meccanica dei materiali: un approccio variazionale'' (INdAM-GNAMPA Project 2015 ``Critical phenomena in the mechanics of materials: a variational approach'').

\bibliography{My_bib_Bonaschi}
\bibliographystyle{alpha}

\end{document}